\documentclass[12pt,letterpaper]{article}

\usepackage{amsmath, amssymb, amsfonts, amsthm}

\usepackage{hyperref}
\usepackage{lineno}
\usepackage[dvipsnames]{xcolor}
\usepackage{tikz}

\hypersetup{
  colorlinks   = true,
  urlcolor     = blue, 
  linkcolor    = blue,
  citecolor   = red
}

\newtheorem{theorem}{Theorem}

\newtheorem{cor}[theorem]{Corollary}
\newtheorem{lemma}[theorem]{Lemma}

\newtheorem*{theorem*}{Theorem}
\newtheorem*{thm*}{Theorem}
\newtheorem*{lemma*}{Lemma}
\newtheorem*{conj*}{Conjecture}
\theoremstyle{definition}
\newtheorem{definition}[theorem]{Definition}

\numberwithin{theorem}{section}
\numberwithin{equation}{section}
\numberwithin{figure}{section}

\newcommand{\oeis}[1]{#1}

\newcommand{\perm}[1]{\mathcal{S}_{#1}}
\newcommand{\permpat}[2]{\perm{#1}(#2)}

\newcommand{\lehmer}[1]{\mathcal{L}_{#1}}
\newcommand{\lehmerpat}[2]{\lehmer{#1}(#2)}

\DeclareMathOperator{\inv}{inv}
\DeclareMathOperator{\Max}{Max}
\DeclareMathOperator{\supp}{supp}

\newcommand{\maxset}[2]{\Max \lehmerpat{#1}{#2}}

\newcommand{\positive}[1]{\supp(#1)}

\newcounter{x}
\newcounter{y}

\newcommand\emptycol[1]{
   \draw[thick] ( - #1, 0) -- (-#1, #1) -- (-#1-1, #1) -- (-#1-1, 0) -- cycle;   
   \foreach \c in {1,...,#1} {
      \draw[thick] (-#1, \c) -- (-#1-1, \c);
  }      
}

\newcommand\fillcol[3]{
    \fill[#3!50] (-#1, 0) -- (-#1, #2) -- (-#1-1, #2) -- (-#1-1, 0) -- cycle;
}

\newcommand\newtri[3][(0,0)]{
 \setcounter{y}{0}
  \foreach \a in {#2} {
      \addtocounter{y}{1}
    }

\begin{scope}[shift={#1}]
\begin{scope}[scale=0.33, shift={(-\value{y}/2,0)}]    
 \setcounter{x}{0}
  \foreach \a in {#2} {
      \addtocounter{x}{-1}
      \fillcol{\value{x}}{\a}{#3}
    }
  \end{scope}
\end{scope}

\begin{scope}[shift={#1}]
\begin{scope}[scale=0.33, shift={(\value{y}/2+1,0)}]    
 \setcounter{x}{0}
  \foreach \a in {#2} {
      \addtocounter{x}{1}
      \emptycol{\value{x}}
    }
  \end{scope}
\end{scope}
}

\title{Maximal Lehmer Codes for Permutations with Classical and Consecutive 321-Avoidance}

\author{Andrew Beveridge\footnote{Department of Mathematics, Statistics and Computer Science, Macalester College, Saint Paul, MN, USA 55105}, Yufan Hu$^*$, and Yucheng Liu$^*$}

\date{}

\begin{document}

\maketitle

\begin{abstract}
Let $\mathcal{S}_n(321)$ and $\mathcal{S}_n(\underline{321})$ denote the sets of $n$-permutations avoiding the classical pattern $321$ and the consecutive pattern $\underline{321}$, respectively. Permutations are in bijection with Lehmer codes, a type of inversion sequence. Using Lehmer codes, we create the corresponding weighted posets $\mathcal{L}_n(321)$ and $\mathcal{L}_n(\underline{321})$, where the weight of a code is the inversion number of its permutation. We show that there are $2^{n-2}$ maximal elements of  $\mathcal{L}_n(321)$, while the maximal elements of $\mathcal{L}_n(\underline{321})$ are enumerated by the Padovan numbers. We also show that when $n$ is even, each of these posets has a unique maximum weight element, and that when $n$ is odd, there are two maximum weight elements. These maximum weight Lehmer codes correspond to the pattern avoiding permutations with maximum inversion number.
\end{abstract}

\noindent
{\bf Keywords:} permutation, pattern avoidance, consecutive pattern, vincular pattern, Lehmer code, poset, inversion number, Padovan number

\medskip

%\noindent
%{\bf OEIS Sequences:} 
%\oeis{A000034},
%\oeis{A000079}, 
%\oeis{A000108}, 
%\oeis{A000931}, 
%\oeis{A000982},
%\oeis{A002620}
%
%
%\medskip

\noindent
{\bf 2020 Mathematics Subject Classification:} 05A05

\section{Introduction}

The study of pattern avoidance in permutations has grown into a central topic in enumerative combinatorics since the foundational work of Simion and Schmidt \cite{simion1985restricted}. 
Originally motivated by the study of stack-sortable permutations and algorithm analysis by Knuth \cite{knuth}, 
the field has expanded to include vincular patterns (also called generalized patterns), introduced by Babson and Steingr\'imsson \cite{Babson2000}. Unlike classical patterns where constraints are purely positional, vincular patterns require that certain adjacent entries in the pattern also be adjacent in the permutation.
 Comprehensive overviews of classical and vincular pattern avoidance can be found in Kitaev \cite{kitaev} and Steingr\'imsson \cite{steingrimsson}. 

Given permutations $\tau = \tau_1 \tau_2 \cdots \tau_k \in \perm{k}$ and $\pi = \pi_1 \pi_2 \cdots \pi_n \in \perm{n}$, we say that $\pi$ \emph{contains} $\tau$ when there exist indices $1 \leq i_1 < i_2 < \cdots < i_k \leq n$ such that the entries of subsequence $\pi_{i_1} \pi_{i_2} \cdots \pi_{i_k}$ are in the same relative order as the entries of $\tau$; otherwise, we say that $\pi$ \emph{avoids} $\tau$. 
We use $\permpat{n}{\tau}$ to denote the subset of $\tau$-avoiding permutations in $\perm{n}$.
 A \emph{vincular pattern} $\tau$ is a permutation in $\perm{k}$, some of whose consecutive entries are underlined. If $\pi \in \perm{n}$ contains vincular pattern $\tau$, and $\tau$ contains $\underline{\tau_i \tau_{i+1} \cdots \tau_j}$, then the entries of $\pi$ corresponding to $\tau_i \tau_{i+1} \cdots \tau_j$ in $\pi$ must be adjacent.

A common goal of pattern avoidance research is to enumerate avoidance classes or to establish bijections to other combinatorial families, for example, see Mansour and Shattuck \cite{mansour2022}, and Frosini, Guerrini, and Rinaldi \cite{Frosini2025}. Beveridge, Heysse, and Robertson \cite{BHR} study the family $\permpat{n}{\underline{32}1}$, 
which is enumerated by the Bell numbers $B_n$, sequence \oeis{A000110} in the OEIS \cite{oeis}. 
They translate these permutations into Lehmer codes in order to characterize the structure of these sets with respect to permutation inversions.

\begin{definition}
\label{def:lehmer}
    The collection $\lehmer{n}$ of \emph{Lehmer codes} of length $n$ is 
    $$
    \lehmer{n} = \{ (p_1, p_2, \ldots, p_n) : 0 \leq p_i \leq n-i \text{ for } 1 \leq i \leq n \}.
    $$
    For a permutation $\pi \in \perm{n}$, its Lehmer code is
$$
L(\pi) := ( p_1, p_2,\ldots , p_n) \quad \mbox{where} \quad
p_i = \left| \left\{ j > i : \pi_j < \pi_i \right\} \right|,
$$     
and this mapping $L : \perm{n} \rightarrow \lehmer{n}$  is a bijection \cite{lehmer}.
Given a permutation pattern $\tau$, we define
$$
\lehmerpat{n}{\tau} = \{ L(\pi) : \pi \in \permpat{n}{\tau}\}
$$
to be the collection of codes for the $\tau$-avoiding permutations.
The \emph{weight} of a Lehmer code is $w(p) = \sum_{i=1}^{n} p_i$, and it is clear that the inversion number of $\pi$ is $\inv \pi = w(p)$.
\end{definition}

Lehmer codes have a natural poset structure $(\lehmer{n}, \preceq)$, as studied by Denoncourt \cite{denoncourt}, Tomie \cite{tomie}, and Bouvel, Ferrari, and Tenner \cite{bouvel}. Given $p, q \in \lehmer{n}$, we have $p \preceq q$ when $p_i \leq q_i$ for $1 \leq i \leq n$. For pattern $\tau$, we define $(\lehmerpat{n}{\tau}, \preceq)$ to be the induced subposet on
$\lehmerpat{n}{\tau}$.
We denote its maximal elements by $\maxset{n}{\tau}$ and its maximum weight elements by $\Max_w\lehmerpat{n}{\tau}$. Note that the maximum weight elements correspond to $\tau$-avoiding permutations with maximum inversion number. The sets $\maxset{n}{\underline{32}1}$ and $\Max_w \lehmerpat{n}{\underline{32}1}$ were characterized by Beveridge, Heysse and Robertson \cite{BHR}. They show that $\maxset{n}{\underline{32}1}$ is enumerated by the Fibonacci numbers (OEIS sequence \oeis{A000045}) and $\Max_w \lehmerpat{n}{\underline{32}1}$ is enumerated by sequence \oeis{A209561} in the OEIS \cite{oeis}. Meanwhile, Beveridge, Hu, and Liu \cite{BHL2026} showed that the reverse-complement map for permutations induces a natural bijection from $\maxset{n}{\underline{32}1}$ to $\maxset{n}{3\underline{21}}$.

We continue the study of Lehmer codes whose permutations have $321$-type pattern avoidance. We investigate the two remaining cases: the Lehmer posets $\lehmerpat{n}{321}$ and $\lehmerpat{n}{\underline{321}}$, corresponding to the classical and consecutive patterns. It is well-known that $|\permpat{n}{321}|=C_n$, the $n$th Catalan number, 
sequence \oeis{A000108} in the OEIS \cite{oeis}.
Meanwhile
$|\permpat{n}{\underline{321}}|$ is enumerated by OEIS sequence \oeis{A049774}.   
This sequence is commonly described as the number of permutations avoiding the consecutive pattern $\underline{123}$. By applying the complement map $\pi_i \mapsto n+1-\pi_i$, $\underline{123}$-avoidance is equivalent to $\underline{321}$-avoidance. Consecutive pattern avoidance, including this family, was studied
systematically by Elizalde and Noy \cite{ElizaldeNoy2003}; for a survey of the topic, see Elizalde \cite{Elizalde2016}.
We characterize the maximal Lehmer codes and the maximum weight Lehmer codes for $\lehmerpat{n}{321}$ and $\lehmerpat{n}{\underline{321}}$. Figure \ref{fig:posets-4} shows the posets $\lehmerpat{4}{321}$ and $\lehmerpat{4}{\underline{321}}$ where we display $p \in \lehmer{n}$ as columns of (filled) boxes in a triangular grid of size $n-1$, where column $i$ contains $0 \leq p_i \leq n-i$ boxes.

\begin{figure}[t]
\centering
\begin{tikzpicture}[scale=0.525]

\begin{scope}

\node (000) at (0,0) { \begin{tikzpicture}[scale=0.6]\newtri[(-1,0)]{0,0,0}{teal} \end{tikzpicture}};

\node (001) at (-2,2.5) {\begin{tikzpicture}[scale=0.6]\newtri[(-1,0)]{0,0,1}{teal}\end{tikzpicture}};
\node (010) at (0,2.5) {\begin{tikzpicture}[scale=0.6]\newtri[(-1,0)]{0,1,0}{teal}\end{tikzpicture}};
\node (100) at (2,2.5) {\begin{tikzpicture}[scale=0.6]\newtri[(-1,0)]{1,0,0}{teal}\end{tikzpicture}};

\node (011) at (-4,5) {\begin{tikzpicture}[scale=0.6]\newtri[(-1,0)]{0,1,1}{teal}\end{tikzpicture}};
\node (020) at (-2,5) {\begin{tikzpicture}[scale=0.6]\newtri[(-1,0)]{0,2,0}{teal}\end{tikzpicture}};
\node (101) at (0,5) {\begin{tikzpicture}[scale=0.6]\newtri[(-1,0)]{1,0,1}{teal}\end{tikzpicture}};
\node (110) at (2,5) {\begin{tikzpicture}[scale=0.6]\newtri[(-1,0)]{1,1,0}{teal}\end{tikzpicture}};
\node (200) at (4,5) {\begin{tikzpicture}[scale=0.6]\newtri[(-1,0)]{2,0,0}{teal}\end{tikzpicture}};

\node (111) at (-3,7.5) {\begin{tikzpicture}[scale=0.6]\newtri[(-1,0)]{1,1,1}{violet}\end{tikzpicture}};
\node (120) at (-1,7.5) {\begin{tikzpicture}[scale=0.6]\newtri[(-1,0)]{1,2,0}{teal}\end{tikzpicture}};
\node (201) at (1,7.5) {\begin{tikzpicture}[scale=0.6]\newtri[(-1,0)]{2,0,1}{violet}\end{tikzpicture}};
\node (300) at (3,7.5) {\begin{tikzpicture}[scale=0.6]\newtri[(-1,0)]{3,0,0}{violet}\end{tikzpicture}};

\node (220) at (-2,10) {\begin{tikzpicture}[scale=0.6]\newtri[(-1,0)]{2,2,0}{violet}\end{tikzpicture}};

\foreach \from/\to in {
000/001, 000/010, 000/100,
001/011, 001/101,
010/011, 010/020, 010/110,
100/101, 100/110, 100/200,
011/111,
020/120,
101/111, 101/201,
110/111, 110/120,
200/201, 200/300,
120/220}
\draw (\from)--(\to);

\node at (0,-1.5) {$\lehmerpat{4}{321}$};

\end{scope}

\begin{scope}[shift={(14,0)}]

\node (000) at (0,0) { \begin{tikzpicture}[scale=0.6]\newtri[(-1,0)]{0,0,0}{teal} \end{tikzpicture}};

\node (001) at (-2,2.5) {\begin{tikzpicture}[scale=0.6]\newtri[(-1,0)]{0,0,1}{teal}\end{tikzpicture}};
\node (010) at (0,2.5) {\begin{tikzpicture}[scale=0.6]\newtri[(-1,0)]{0,1,0}{teal}\end{tikzpicture}};
\node (100) at (2,2.5) {\begin{tikzpicture}[scale=0.6]\newtri[(-1,0)]{1,0,0}{teal}\end{tikzpicture}};

\node (011) at (-4,5) {\begin{tikzpicture}[scale=0.6]\newtri[(-1,0)]{0,1,1}{teal}\end{tikzpicture}};
\node (020) at (-2,5) {\begin{tikzpicture}[scale=0.6]\newtri[(-1,0)]{0,2,0}{teal}\end{tikzpicture}};
\node (101) at (0,5) {\begin{tikzpicture}[scale=0.6]\newtri[(-1,0)]{1,0,1}{teal}\end{tikzpicture}};
\node (110) at (2,5) {\begin{tikzpicture}[scale=0.6]\newtri[(-1,0)]{1,1,0}{teal}\end{tikzpicture}};
\node (200) at (4,5) {\begin{tikzpicture}[scale=0.6]\newtri[(-1,0)]{2,0,0}{teal}\end{tikzpicture}};

\node (111) at (-3,7.5) {\begin{tikzpicture}[scale=0.6]\newtri[(-1,0)]{1,1,1}{teal}\end{tikzpicture}};
\node (120) at (-1,7.5) {\begin{tikzpicture}[scale=0.6]\newtri[(-1,0)]{1,2,0}{teal}\end{tikzpicture}};
\node (201) at (1,7.5) {\begin{tikzpicture}[scale=0.6]\newtri[(-1,0)]{2,0,1}{teal}\end{tikzpicture}};
\node (300) at (3,7.5) {\begin{tikzpicture}[scale=0.6]\newtri[(-1,0)]{3,0,0}{teal}\end{tikzpicture}};

\node (211) at (0,10) {\begin{tikzpicture}[scale=0.6]\newtri[(-1,0)]{2,1,1}{teal}\end{tikzpicture}};
\node (220) at (-2,10) {\begin{tikzpicture}[scale=0.6]\newtri[(-1,0)]{2,2,0}{violet}\end{tikzpicture}};
\node (301) at (2,10) {\begin{tikzpicture}[scale=0.6]\newtri[(-1,0)]{3,0,1}{teal}\end{tikzpicture}};

\node (311) at (0,12.5) {\begin{tikzpicture}[scale=0.6]\newtri[(-1,0)]{3,1,1}{violet}\end{tikzpicture}};

\foreach \from/\to in {
000/001, 000/010, 000/100,
001/011, 001/101,
010/011, 010/020, 010/110,
100/101, 100/110, 100/200,
011/111,
020/120,
101/111, 101/201,
110/111, 110/120,
200/201, 200/300,
111/211,
120/220,
201/211, 201/301,
300/301,
211/311,
301/311}
\draw (\from)--(\to);

\node at (0,-1.5) {$\lehmerpat{4}{\underline{321}}$};

\end{scope}

\end{tikzpicture}
\caption{The posets of $321$-avoiding and $\underline{321}$-avoiding Lehmer codes of length $4$. The maximal Lehmer codes are colored violet.}
\label{fig:posets-4}
\end{figure}

For classical $321$-avoidance, we show that a maximal code $p \in \Max \lehmerpat{n}{321}$ is determined by its \emph{zero set}
$$
Z(p) = \{ i \in [n] : p_i=0 \}.
$$
This gives the following enumeration by sequence \oeis{A000079} in the OEIS \cite{oeis}.

\begin{theorem}
\label{thm:number_maximal_321}
For $n\geq 2$, we have
$
|\maxset{n}{321}|=2^{n-2}.
$
\end{theorem}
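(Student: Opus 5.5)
The plan is to first translate $321$-avoidance into an explicit condition on Lehmer codes. Then I will show that a maximal code is determined by its support $S=[n]\setminus Z(p)$, and that the possible supports are exactly the subsets $S\subseteq[n-1]$ with $1\in S$. Since $S\setminus\{1\}$ ranges over all subsets of $\{2,\dots,n-1\}$, this gives $2^{n-2}$ maximal codes.

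\emph{Step 1 (characterization).} Let $\pi\in\perm{n}$ with code $p=L(\pi)$, and write $m_i=i+p_i$.

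Suppose first that $\pi$ avoids $321$ and $p_i>0$. Then $\pi_i$ must be a left-to-right maximum. Otherwise some larger entry precedes $\pi_i$ and some smaller entry follows it, giving a $321$. For a left-to-right maximum, all $\pi_i-1$ smaller values other than those in the first $i-1$ positions lie to the right, so $p_i=\pi_i-i$, i.e.\ $m_i=\pi_i$. Left-to-right maxima increase, so $m$ is strictly increasing on $\supp(p)=\{i:p_i>0\}$.

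Conversely, let $p\in\lehmer{n}$ have $m$ strictly increasing on its support. I expect the corresponding permutation to be the union of the increasing sequence $(m_i)_{i\in\supp(p)}$ and the increasing sequence of positions with $p_i=0$. Since a union of two increasing subsequences has no decreasing subsequence of length $3$, it avoids $321$. This converse is the step I expect to need the most care: I must check that the zero positions really carry the remaining values in increasing order and that those are consistent with the $p_i$. A cleaner alternative is to argue by counting, comparing the number of such codes with $C_n$.

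So $\lehmerpat{n}{321}$ is exactly the set of codes with $p_i\le n-i$ and $i+p_i$ strictly increasing on $\supp(p)$. Note that $p_n=0$ always, and $m_i\le n$.

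\emph{Step 2 (maximal codes are greedy).} Let $S=\{s_1<\dots<s_k\}$ be the support of $p$.

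First, if $p$ is maximal then $1\in S$. Indeed, if $p_1=0$, set $p_1=1$, so $m_1=2$. The first support element $s$ satisfies $m_s\ge s+1\ge 3$, so the condition is preserved and $p$ was not maximal.

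Second, maximality forces $m_{s_k}=n$ and $m_{s_j}=m_{s_{j+1}}-1$ for each $j<k$. Otherwise, take the rightmost $j$ where this tightness fails and increase $p_{s_j}$ by one; this keeps $m$ strictly increasing on $S$ and stays within the bound $p_i\le n-i$.

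Hence $\{m_s:s\in S\}=[n-k+1,n]$, so $p$ is determined by $S$.

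\emph{Step 3 (every such $S$ gives a maximal code).} Take any $S\subseteq[n-1]$ with $1\in S$ and $|S|=k$, and define $m_{s_j}=n-k+j$.

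The requirement $p_{s_j}>0$ holds because $s_j\le n-1-(k-j)<n-k+j$. Also $m_{s_j}\le n$, so $p_{s_j}\le n-s_j$. By Step 1 this code lies in $\lehmerpat{n}{321}$.

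To see it is maximal, note first that no entry with $s\in S$ can be raised, by tightness. It remains to rule out making a zero position $t\notin S$ positive. The new value $m_t$ would have to lie strictly between the values of $m$ at the neighbours of $t$ in $S$, and these values are consecutive integers. If $t$ is to the right of all of $S$, we would need $m_t>n$. The case of $t$ to the left of all of $S$ cannot occur, since $1\in S$. Thus any code $q\succeq p$ in the poset has $\supp(q)=S$, and by tightness $q=p$.

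Since distinct $S$ give distinct codes, Steps 2 and 3 give a bijection between $\maxset{n}{321}$ and the subsets of $\{2,\dots,n-1\}$, so $|\maxset{n}{321}|=2^{n-2}$.
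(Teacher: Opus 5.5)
Your outline follows the paper's argument closely: Lemma~\ref{lem:321-postive-increase}, Theorem~\ref{thm:maximal_condition_321} and Corollary~\ref{cor:maximal_condition_321}, with the support $S$ in place of its complement $Z(p)$. Steps 2 and 3 are correct once Step 1 is fully established.

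The gap is the converse half of Step 1, which you state only as an expectation: every $p\in\lehmer{n}$ with $i+p_i$ strictly increasing on $\supp(p)$ decodes to a $321$-avoiding permutation. This half is load-bearing. In Step 2, each modification (setting $p_1=1$, or raising a non-tight $p_{s_j}$) contradicts maximality only if the new code lies in $\lehmerpat{n}{321}$. In Step 3 you need it to know that the code built from $S$ is in $\lehmerpat{n}{321}$ at all. The forward direction alone only shows that $\lehmerpat{n}{321}$ is contained in the set of codes satisfying the $m$-condition, which gives neither. Counting against $C_n$ is not a cheaper route either: it would need an independent proof that the codes satisfying the $m$-condition are Catalan-many.

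The direct verification is short and goes the way you suggest. Let $M=\{m_s : s\in S\}$, a set of $|S|$ distinct values in $[n]$. Let $\sigma$ have $\sigma_s=m_s$ for $s\in S$, and the values of $[n]\setminus M$ in increasing order on the zero positions. The one thing to check is that every entry to the left of a support position $s$ is less than $m_s$.
\begin{itemize}
\item For support positions this is the monotonicity of $m$.
\item Suppose $z<s$ is the $r$-th zero position. By monotonicity, the elements of $M$ below $m_s$ are exactly the $m_t$ with $t\in S$, $t<s$. So the number of elements of $[n]\setminus M$ below $m_s$ is $(m_s-1)-|S\cap[1,s-1]|=p_s+|Z(p)\cap[1,s-1]|\ge 1+r$. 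Hence $\sigma_z<m_s$.
\end{itemize}
It follows that $\sigma_s$ has exactly $(m_s-1)-(s-1)=p_s$ smaller entries to its right. Each zero-position entry is smaller than every later entry, by construction for later zero positions and by the fact just checked for later support positions. So $L(\sigma)=p$, and since $\sigma$ is a union of two increasing subsequences, it avoids $321$.

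The paper instead argues through the decoding process. Between consecutive support positions $s_i<s_{i+1}$, the $j=s_{i+1}-s_i-1$ zero positions consume the $j$ smallest remaining candidates. So $\pi_{s_{i+1}}>\pi_{s_i}$ if and only if $p_{s_{i+1}}\ge p_{s_i}-j$, that is, $m_{s_{i+1}}>m_{s_i}$.

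Your left-to-right-maxima proof of the forward direction is a tidy alternative to the paper's, since it identifies $\pi_s=m_s$ outright. With the converse supplied as above, your proof is complete.
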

Meanwhile, there are either one or two maximum weight Lehmer codes, depending on the parity of $n$ (sequence \oeis{A000034} in the OEIS \cite{oeis}). These correspond to 321-avoiding permutations with maximum inversion number $\lfloor n^2 /4 \rfloor$, given by sequence \oeis{A002620} in the OEIS \cite{oeis}.

\begin{theorem}
\label{thm:maximum_weight_321}
For $n \geq 2$, we have
$$
\max \{ \inv(\pi) : \pi \in \permpat{n}{321}  \} = 
\max \{ w(p) : p \in \lehmerpat{n}{321} \}
=
\left\lfloor \frac{n^2}{4} \right\rfloor.
$$
Moreover, if $n=2r$ is even, the unique maximum weight Lehmer code is
\[
p=(\underbrace{r,\ldots,r}_{r}, \underbrace{0,\ldots,0}_{r}),
\]
while if $n=2r+1$ is odd, the maximum weight Lehmer codes are
\[
p=(\underbrace{r,\ldots,r}_{r+1}, \underbrace{0,\ldots,0}_{r}) \qquad \text{ and } \qquad p =  (\underbrace{r+1,\ldots,r+1}_{r}, \underbrace{0,\ldots,0}_{r+1}).
\]
\end{theorem}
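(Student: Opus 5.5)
The plan is to use the standard structure of $321$-avoiding permutations: $\pi$ avoids $321$ if and only if it is a union of two increasing subsequences. Equivalently, its entries split into left-to-right maxima and the remaining entries, and the remaining entries form an increasing sequence. We translate this into Lehmer codes and bound the weight directly. If $p_i>0$, then $\pi_i$ has a smaller entry to its right. A $321$-free permutation forbids any $j<i$ with $\pi_j>\pi_i$, so $\pi_i$ must be a left-to-right maximum. Let $A=\{i : p_i>0\}$ be the support and $B=[n]\setminus A$ the zero set. The key observation is that every inversion $(i,j)$ has $i\in A$ and $j\in B$. Indeed, $j$ is not a left-to-right maximum, so $\pi_j$ is not the first entry of a $21$ with a later smaller entry, since otherwise we would get a $321$. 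Hence $p_j=0$.

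From this, the inversion count satisfies $\inv(\pi)\le |\{(i,j): i<j,\ i\in A,\ j\in B\}| \le |A|\cdot|B| \le \lfloor n^2/4\rfloor$, with $|A|+|B|=n$. For the lower bound and attainment, we exhibit $\pi=(r+1)(r+2)\cdots(2r)\,1\,2\cdots r$ for $n=2r$. For $n=2r+1$ we use the two analogous shuffles, with $|A|=r+1$ or $r$. These avoid $321$ since they are unions of two increasing runs, and their codes are exactly the stated ones.

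For uniqueness, equality forces three things. First, $|A|\cdot|B|=\lfloor n^2/4\rfloor$, so $|A|=r$ when $n$ is even, and $|A|\in\{r,r+1\}$ when $n$ is odd. Second, every $i\in A$ precedes every $j\in B$, so $A=\{1,\dots,|A|\}$. Third, every such pair is an inversion, so every entry in positions of $A$ exceeds every entry in positions of $B$. Since the entries indexed by $A$ (left-to-right maxima) and by $B$ (an increasing remainder) are each increasing, $\pi$ is determined, and so is its code. That code is $p_i=|B|$ for $i\le|A|$ and $p_i=0$ otherwise. This gives the displayed codes, since $|B|=r$ when $|A|=r+1$, and $|B|=r+1$ when $|A|=r$.

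The main obstacle is the careful proof that no inversion starts at a position with $p_j=0$ or ends at a position with $p_i>0$. This requires checking that both endpoints of an inversion have the claimed types. If $(i,j)$ is an inversion and $p_j>0$, there is $k>j$ with $\pi_k<\pi_j<\pi_i$, which is a $321$. Conversely, $i$ trivially has $p_i>0$. This is simpler than the left-to-right-maxima phrasing, and I would present it that way.
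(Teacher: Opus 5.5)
Your proof is correct. It ends with the same optimization as the paper: maximize $k(n-k)$ over the size of the zero set, with equality exactly when all support positions precede all zero positions. It justifies the key inequality differently, though.

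The paper uses its characterization of maximal codes, Theorem~\ref{thm:maximal_condition_321}. For $p\in\maxset{n}{321}$, each positive entry equals the number of zeros to its right, so $w(p)$ is \emph{exactly} the number of pairs $(i,z)$ with $i<z$, $i\in\positive{p}$, $z\in Z(p)$. Since a maximum weight code is maximal, the problem becomes an optimization over zero sets, and the extremal codes exist by Corollary~\ref{cor:maximal_condition_321}.

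You work directly with permutations. Your observation is that every inversion $(i,j)$ of a $321$-avoider runs from a support position to a later zero position. Since $w(p)=\inv(\pi)$, this gives $w(p)\le |\{(i,z): i<z,\ i\in\positive{p},\ z\in Z(p)\}|$ for \emph{every} $p\in\lehmerpat{n}{321}$. That is the inequality of Lemma~\ref{lem:321-zero-bound}, obtained in one line. You need neither the decoding argument of Lemma~\ref{lem:321-postive-increase}, nor the maximal-element characterization, nor the step ``maximum weight implies maximal.''

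The cost is that you must exhibit the extremal permutations and compute their codes by hand. You also get uniqueness by reconstructing $\pi$ rather than reading the code off its zero set. The paper's route instead presents the theorem as a corollary of its zero-set description of $\maxset{n}{321}$.

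One wording fix in the uniqueness step. $B=[n]\setminus A$ is not, in general, the set of non-left-to-right maxima: every position of the identity permutation is a left-to-right maximum with $p_i=0$. So the fact that the entries in positions of $B$ increase should come from your key observation (no inversion begins in $B$), not from the parenthetical ``increasing remainder.'' Similarly, $p_i=|B|$ for $i\in A$ uses that no inversion ends in $A$. With that attribution made explicit, the argument is complete.
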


For example, the unique maximum weight code in $\lehmerpat{8}{321}$ is
$(4,4,4,4,0,0,0,0)$, which corresponds to the permutation $56781234$. 
Meanwhile, the maximum weight codes in $\lehmerpat{9}{321}$ are
$(4,4,4,4,4,0,0,0,0)$ and $(5,5,5,5,0,0,0,0,0)$, corresponding to the permutations $567891234$ and $678912345$, respectively.

Turning to the consecutive pattern $\underline{321}$, we show that a
maximal code $p \in \Max \lehmerpat{n}{\underline{321}}$ is determined by its \emph{descent set}
$$
D(p) = \{ i \in [n-1] : p_i > p_{i+1} \}.
$$
Furthermore, the collection of descent sets for $\Max \lehmerpat{n}{\underline{321}}$ is precisely the collection of inclusion-maximal subsets of $[n-1]$ with no consecutive elements. This leads to a novel appearance of the Padovan numbers, sequence \oeis{A000931} in the OEIS \cite{oeis}.

\begin{theorem}
\label{thm:consec_321_number}
The number of maximal $\underline{321}$-avoiding Lehmer codes 
is given by the Padovan number
$
|\maxset{n}{\underline{321}}| = P_{n-1},
$
where $P_0=P_1=1$, $P_2=2$, and
$P_n = P_{n-2} + P_{n-3}$ for $n \geq 3$.
\end{theorem}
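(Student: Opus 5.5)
The plan is to reduce everything to descent sets, in three steps: translate consecutive $\underline{321}$-avoidance into a condition on the code, build a canonical largest code for each admissible descent set, and then count the admissible maximal descent sets.

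\textbf{Step 1: avoidance in terms of the code.} Since $p_i$ counts the entries to the right of $\pi_i$ that are smaller than $\pi_i$, we have $\pi_i>\pi_{i+1}$ if and only if $p_i>p_{i+1}$. Indeed, if $\pi_i>\pi_{i+1}$, then every entry after position $i+1$ that is smaller than $\pi_{i+1}$ is also smaller than $\pi_i$, and $\pi_{i+1}$ itself is counted too, so $p_i\ge p_{i+1}+1$. The converse is symmetric. Consequently $\pi$ avoids $\underline{321}$ exactly when $D(p)$ contains no two consecutive integers. So $\lehmerpat{n}{\underline{321}}$ is the set of codes in $\lehmer{n}$ whose descent set is an independent set of the path on vertex set $[n-1]$.

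\textbf{Step 2: a canonical code for each descent set.} For $D\subseteq[n-1]$ with no consecutive elements, define $q^D$ from right to left:
\begin{itemize}
\item $q^D_n=0$;
\item $q^D_i=n-i$ if $i\in D$;
\item $q^D_i=q^D_{i+1}$ if $i\notin D$.
\end{itemize}
This $q^D$ has the following properties.
\begin{itemize}
\item It lies in $\lehmer{n}$: by induction from the right, $q^D_{i+1}\le n-i-1<n-i$.
\item Its descent set is exactly $D$: for $i\in D$ we get $n-i>q^D_{i+1}$, and for $i\notin D$ we have equality.
\item It dominates every code with fewer descents: if $p\in\lehmer{n}$ satisfies $D(p)\subseteq D$, then $p\preceq q^D$. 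By downward induction on $i$, either $p_i\le n-i=q^D_i$, or $i\notin D(p)$ and $p_i\le p_{i+1}\le q^D_{i+1}=q^D_i$.
\item It is monotone in $D$: if $D\subseteq D'$, then $q^D\preceq q^{D'}$ by the same induction.
\end{itemize}

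\textbf{Step 3: the maximal elements.} Every avoiding code $p$ satisfies $p\preceq q^{D(p)}\preceq q^{D'}$ for some inclusion-maximal independent set $D'\supseteq D(p)$. Hence every maximal element is of the form $q^{D'}$ with $D'$ maximal independent.

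Conversely, let $D$ and $D'$ be distinct maximal independent sets. By maximality neither contains the other, so there is some $i\in D\setminus D'$. Then $q^{D}_i=n-i$, while $q^{D'}_i=q^{D'}_{i+1}\le n-i-1$, so $q^D\not\preceq q^{D'}$. Thus the codes $q^D$ for maximal $D$ are pairwise incomparable and pairwise distinct. This gives a bijection between $\maxset{n}{\underline{321}}$ and the inclusion-maximal independent sets of the path on $m=n-1$ vertices. In particular, each maximal code is determined by its descent set.

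\textbf{Step 4: counting.} Let $a_m$ be the number of maximal independent sets of the path on $[m]$. The small cases are $a_0=1$ (the empty set), $a_1=1$, and $a_2=2$; for comparison, $a_3=2$ matches the two violet codes in Figure~\ref{fig:posets-4}.

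For $m\ge3$, I split on the smallest element of a maximal independent set $D$. This element must be $1$ or $2$, since otherwise $1$ could be added to $D$.
\begin{itemize}
\item If $\min D=1$, then $2\notin D$. The set $D\setminus\{1\}$, shifted down by $2$, is a maximal independent set of the path on $[m-2]$.
\item If $\min D=2$, then $3\notin D$. The set $D\setminus\{2\}$, shifted down by $3$, is a maximal independent set of the path on $[m-3]$.
\end{itemize}
Both maps are reversible, which gives $a_m=a_{m-2}+a_{m-3}$. Together with the initial values this yields $a_m=P_m$, and hence $|\maxset{n}{\underline{321}}|=P_{n-1}$.

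I expect the main obstacle to be the bijectivity checks in this recursion. One must verify that maximality of $D$ on $[m]$ corresponds exactly to maximality of the shifted remainder: vertex $m-2$ or $m-3$ is only blocked within the shortened path. One must also confirm that the degenerate cases $m-3=0$ and $m-2=0$ are handled correctly by the convention $a_0=1$.
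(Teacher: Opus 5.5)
Your proof is correct. The counting step matches the paper, but the characterization of the maximal codes takes a different route.

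The paper proves Lemma~\ref{lem:maximal_condition_consec_321} by local surgery on a maximal code. It gets weak decrease by replacing $p_i$ with $p_{i+1}$, and the value $n-d_j$ at each descent by bumping it up by one. It obtains the explicit gap condition \eqref{eqn:desc_set_consec_321} through three separate augmentations, then proves the converse with a block argument.

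You instead construct $q^D$ once and prove the single domination fact $D(p)\subseteq D\Rightarrow p\preceq q^D$. Combined with $q^D_i=n-i>q^{D'}_i$ for $i\in D\setminus D'$, this shows that $D\mapsto q^D$ is an order embedding of the independent sets of the path, ordered by inclusion, into $\lehmerpat{n}{\underline{321}}$. Its image dominates every code, so the maximal codes are exactly the $q^D$ with $D$ maximal, with no case analysis.

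One sentence is left implicit and should be written out. If $q^D\preceq r$ with $r$ avoiding, then $r\preceq q^{D''}$ for some maximal $D''$; incomparability forces $D''=D$, and hence $r=q^D$. Pairwise incomparability of the $q^D$ alone does not give maximality; you need to combine it with domination in this way.

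What the paper's route buys is the explicit description \eqref{eqn:desc_set_consec_321} of the maximal descent sets. That description makes the shift bijections in the recursion immediate, and the paper reuses it in the block-length analysis for the maximum weight theorem.

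Your counting step is the paper's recursion on the first descent, $\min D\in\{1,2\}$. Because you phrase it via maximal independent sets, you still owe the transfer-of-maximality check you postpone. It is routine, but you have located the delicate vertex wrongly. The vertex that needs care is the first vertex of the shortened path: original vertex $3$ (resp.\ $4$), which loses its left neighbor $2$ (resp.\ $3$). The transfer works because that neighbor is excluded from $D$ by $1\in D$ (resp.\ $2\in D$). The last vertex of the shortened path corresponds to original vertex $m$ and has the same neighborhood in both paths, so nothing needs checking there.
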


Once again, there are either one or two maximum weight Lehmer codes, depending on the parity of $n$. These correspond to $\underline{321}$-avoiding permutations with maximum inversion number $\left\lceil (n-1)^2/2\right\rceil$, a shift of sequence \oeis{A000982} in the OEIS \cite{oeis}.

\begin{theorem}
\label{thm:maximum-weight-321}
For $n \geq 2$, we have
$$
\max \{ \inv(\pi) : \pi \in \permpat{n}{\underline{321}}  \} = 
\max \{ w(p) : p \in \lehmerpat{n}{\underline{321}} \}
=
\left\lceil \frac{(n-1)^2}{2}\right\rceil.
$$
Moreover, if $n = 2r$ is even, the unique maximum weight Lehmer code is
$$
(2r-1, 2r-3, 2r-3, 2r-5, 2r-5, \ldots, 3, 3, 1, 1, 0)
$$
and when $n=2r+1$ is odd, the maximum weight Lehmer codes are
$$
(2r, 2r-2, 2r-2, 2r-4, 2r-4, \ldots, 4, 4, 2, 2, 0, 0)
$$
and
$$
(2r-1, 2r-1, 2r-3, 2r-3, \ldots, 3, 3, 1, 1,  0).
$$
\end{theorem}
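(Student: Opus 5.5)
We first translate $\underline{321}$-avoidance into a condition on Lehmer codes. For any permutation, $\pi_i>\pi_{i+1}$ holds exactly when $p_i>p_{i+1}$. Indeed, if $\pi_i>\pi_{i+1}$, then every later entry smaller than $\pi_{i+1}$ is also smaller than $\pi_i$, and $\pi_{i+1}$ itself is counted, so $p_i\ge p_{i+1}+1$. The converse follows similarly, since $\pi_i<\pi_{i+1}$ gives $p_i\le p_{i+1}$. Hence $\pi$ avoids $\underline{321}$ if and only if its code has no $i$ with $p_i>p_{i+1}>p_{i+2}$. Equivalently, the descent set $D(p)$ contains no two consecutive integers. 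The problem is therefore to maximize $\sum p_i$ over codes $p$ with $0\le p_i\le n-i$ and no strictly decreasing run of length three.

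For the upper bound, we pair consecutive positions. The constraint says that for each $i$ we have either $p_{i+1}\le p_{i+2}\le n-i-2$, or $p_{i+1}\ge p_i$, which forces $p_{i+1}\le p_i\le n-i$. The first case is the weaker-looking one, so a cleaner approach is to bound blocks. Consider a pair $(p_i,p_{i+1})$ in which position $i+1$ is a non-descent, so $p_{i+1}\le p_{i+2}\le n-i-2$. Then $p_i+p_{i+1}\le (n-i)+(n-i-2)$. If instead $i+1$ is a descent, then $i$ is not a descent, so $p_i\le p_{i+1}\le n-i-1$, giving $p_i+p_{i+1}\le 2(n-i-1)$. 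In both cases $p_i+p_{i+1}\le 2(n-i-1)$, with one exception: when $i+1=n$ the first case needs care, and there $p_{n}=0$.

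For $n=2r$ we group positions as $(1,2),(3,4),\dots,(2r-1,2r)$. Each pair $(2k-1,2k)$ contributes at most $2(n-2k)$, except that the final pair $(2r-1,2r)$ has $p_{2r}=0$ and $p_{2r-1}\le 1$. This gives the bound $\sum_{k=1}^{r-1}2(2r-2k)+1 = 2r(r-1)+1=\lceil (n-1)^2/2\rceil$, and the target code $(2r-1,2r-3,2r-3,\dots,1,1,0)$ attains it. Since the target code's entries are not paired in this way (it is $p_1$ alone, then pairs $(2,3),(4,5),\dots$), it may be more natural to pair $(2,3),(4,5),\dots$ and treat $p_1\le n-1$ separately. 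I would pick whichever grouping makes the equality analysis clean.

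For $n=2r+1$ there are two natural groupings: $p_1$ alone followed by pairs, or pairs followed by $p_n=0$ alone. Both give the bound $2r^2=\lceil (n-1)^2/2\rceil$. The two extremal codes correspond to these two groupings.

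For the uniqueness statements, we track equality. In each pair, equality forces specific values: the pair bound $2(n-i-1)$ is achieved only with $p_i=p_{i+1}=n-i-1$, or with $p_i=n-i$ and $p_{i+1}=p_{i+2}=n-i-2$. We propagate these forced values left to right, showing that the equality cases chain together into exactly the listed codes. We then check directly that the listed codes avoid three-term strict descents, have the stated weight, and satisfy $p_i\le n-i$. The identity between maximum inversion number and maximum weight is immediate because $L$ is a bijection with $\inv\pi=w(p)$.

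I expect the main obstacle to be the uniqueness/equality propagation. The second option in each pair borrows a constraint on $p_{i+2}$, which couples neighbouring blocks, so we must ensure no mixed chains of equality cases yield extra maximizers. I would handle this by induction on $n$: removing the first one or two entries of a maximizer should leave a maximizer for $n-1$ or $n-2$ after subtracting the appropriate constants. Small cases $n=2,3,4$ can be checked against Figure \ref{fig:posets-4}.
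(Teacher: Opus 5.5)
Your route is correct in substance and differs from the paper's. The paper does not bound the weight directly. It uses Lemma~\ref{lem:maximal_condition_consec_321}: a maximum weight code is maximal, so it is weakly decreasing and constant on blocks, with value $n-d_j$ on the $j$th block. It then writes the weight as $\binom{n}{2}-\sum_j\binom{b_j}{2}$, with block lengths $b_j\in\{1,2\}$ at the ends and $b_j\in\{2,3\}$ inside, and minimizes this defect by exchange moves that remove $3$-blocks.

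You bypass the structure of maximal codes with the local inequality $p_i+p_{i+1}\le 2(n-i-1)$ for $1\le i\le n-2$, summed over a pairing. That inequality is correct. Your sentence ``which forces $p_{i+1}\le p_i\le n-i$'' has the inequality reversed, but the version you actually use, $p_i\le p_{i+1}\le n-i-1$, is right. This gives the upper bound in a few self-contained lines. The price is that uniqueness needs an equality-propagation argument, whereas in the paper it falls out of the short list of admissible block-length sequences.

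Two points need fixing.

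First, for $n=2r+1$ the two groupings are not both tight. The grouping $\{1\},(2,3),\ldots,(2r,2r+1)$ gives $2r+2r(r-1)+1=2r^2+1$, because the last pair $(n-1,n)$ has bound $1$, not $0$. For $n=3$ this gives $3$, while the maximum is $2$. You must use $(1,2),(3,4),\ldots,(2r-1,2r),\{2r+1\}$, whose bound is $\sum_{k=1}^{r}2(2r+1-2k)=2r^2$. Both extremal codes are equality cases of this single grouping, so drop the idea that each code belongs to its own grouping.

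Second, the propagation you flag as the main obstacle closes directly, without induction.
\begin{itemize}
\item Call an equality pair at $(i,i+1)$ type A if $p_i=p_{i+1}=n-i-1$; then automatically $p_{i+2}\le n-i-2<p_{i+1}$.
\item Call it type B if $p_i=n-i$ and $p_{i+1}=p_{i+2}=n-i-2$.
\item A B-pair at $(2k-1,2k)$ forces $p_{2k+1}=n-2k-1$. This is incompatible with an A-pair at $(2k+1,2k+2)$, which needs $p_{2k+1}=n-2k-2$.
\item An A-pair at $(2k-1,2k)$ followed by a B-pair gives descents at both $2k$ and $2k+1$, which is forbidden.
\end{itemize}
Hence the types are all A or all B.
\begin{itemize}
\item For odd $n$, these two chains are exactly the two listed codes.
\item For even $n$, the final pair is forced to be $(1,0)$. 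An A-pair at $(2r-3,2r-2)$ would produce $(2,2,1,0)$, with consecutive descents. So every pair is of type B, giving the unique code $(2r-1,2r-3,2r-3,\ldots,1,1,0)$.
\end{itemize}
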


For example, the unique maximum weight code in $\lehmerpat{8}{\underline{321}}$ is
$(7,5,5,3,3,1,1,0)$, which corresponds to the permutation $86745231$. 
Meanwhile, the maximum weight codes in $\lehmerpat{9}{\underline{321}}$ are
$(8,6,6,4,4,2,2,0,0)$ and $(7,7,5,5,3,3,1,1,0)$, corresponding to the permutations $978563412$ and $896745231$, respectively.

\subsection{Roadmap and Notation}

The paper is structured as follows. Section \ref{sec:prelim} describes how to map between permutations and Lehmer codes.  Section \ref{sec:classic-321}
contains our results about classical $321$-avoiding permutations.
Section \ref{sec:consec-321}
contains our results about consecutive $\underline{321}$-avoiding permutations.

In addition to  the zero set $Z(p)$ and the descent set $D(p)$, we will also use  the \emph{support set}
$$
\positive{p} = \{ i \in [n-1] : p_i > 0 \}.
$$
Furthermore, we will always list the elements of these sets in increasing order, with the following naming conventions:
$$
\begin{array}{rcll}
    Z(p) &=& \{ z_1, z_2, \ldots, z_k \} &  \mbox{where } z_1 < z_2 < \cdots < z_k, \\
    \positive{p} &=& \{s_1, s_2, \ldots, s_\ell \} &  \mbox{where } s_1 < s_2 < \cdots < s_\ell, \\
    D(p) &=& \{d_1, d_2, \ldots, d_m \} &  \mbox{where } d_1 < d_2 < \cdots < d_m.
\end{array}
$$
Finally, we assume $n \geq 2$ throughout the paper, to avoid trivial arguments about $\perm{1}$.

\section{Preliminaries}

\label{sec:prelim}

We describe the bijections between permutations $\perm{n}$ and Lehmer codes $\lehmer{n}$. The decoding process, which transforms code $p$ into permutation $\pi$, will be used repeatedly in the sections that follow. 

Starting with $\pi = \pi_1 \pi_2 \cdots \pi_n \in \perm{n}$, its Lehmer code $p = (p_1, p_2, \ldots, p_n)$ is given by $p_i = | \{ j > i : \pi_j < \pi_i \}|$. For example, if $\pi = 2431$ then $p=(1,2,1,0)$. Indeed, there is one entry to the right of $\pi_1=2$ that is smaller, there are two entries to the right of $\pi_2=4$ that are smaller, and there is one entry to the right of $\pi_3=3$ that is smaller.

We now consider the inverse mapping: decoding $p \in \lehmer{n}$ to recover its  corresponding permutation $\pi \in \perm{n}$.
First, create a candidate list $C=(1,2,\ldots, n)$. We  process the entries of $p=(p_1, \ldots, p_n)$ from left to right, and we update $C$ as we go. At step $i$, we skip over the $p_i$ smallest elements of $C$, setting $\pi_i$ to be the entry in position $(p_i+1)$ of $C$, and we remove this value from the list $C$. This guarantees that there will be exactly $p_i$ elements smaller than $\pi_i$ among the entries $\pi_{i+1}, \ldots, \pi_n$.

For example, suppose that we want to decode $p=(3,1,0,0) \in \lehmer{4}$. We start with $C=(1,2,3,4)$. We set $\pi_1=4$ (skipping over $p_1=3$ entries of $C$), and update $C=(1,2,3)$. Next, we set $\pi_2=2$ (skipping over $p_2=1$ entries of $C$), and update $C=(1,3)$. We then set $\pi_3=1$ (skipping over $p_3=0$ entries of $C$), and update $C=(3)$. Finally, we set $\pi_4=3$. So the corresponding permutation is $\pi=4213 \in \perm{4}$.

\section{Lehmer Codes for Classical $321$-Avoiding Permutations}

\label{sec:classic-321}

In this section, we turn our attention to classical $321$-avoiding permutations. The main goal is to describe the maximal elements of $\lehmerpat{n}{321}$ in terms of their zero sets.

We begin by showing  that $321$-avoidance is equivalent to a strict increase condition on the values $p_i+i$ on the support of $p$.
This leads to a sharp upper bound on each positive entry, expressed as the number of zeros to its right. Maximality will force these bounds to be attained, giving a complete zero set description of the maximal codes.

\begin{lemma}
 \label{lem:321-postive-increase}   
Let $p=(p_1,\ldots,p_n)\in \lehmer{n}$ with
$\positive{p} = \{s_1,s_2,\ldots,s_\ell\}$. Then $p\in \lehmerpat{n}{321}$ if and only if
\begin{equation}
\label{eqn:321-postive-increase}
p_{s_1}+s_1<p_{s_2}+s_2<\cdots<p_{s_\ell}+s_{\ell}.
\end{equation}
\end{lemma}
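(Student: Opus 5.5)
We will use the standard characterization of $321$-avoiding permutations: $\pi$ avoids $321$ if and only if no entry $\pi_j$ with $p_j>0$ (i.e., that has a smaller entry to its right) itself has a larger entry to its left. Equivalently, $\pi$ contains $321$ exactly when some index $j$ has $p_j>0$ and some $i<j$ has $\pi_i>\pi_j$. This is immediate: a $321$ occurrence $\pi_i>\pi_j>\pi_k$ with $i<j<k$ gives such a $j$, and conversely any such $j$ together with its larger left entry and smaller right entry forms a $321$. So the plan is to translate, for two support indices $i<j$, the condition "$\pi_i>\pi_j$" into the inequality $p_i+i \ge p_j+j$.

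The key step is the following claim. For indices $i<j$ with $p_i>0$ and $p_j>0$, we have $\pi_i>\pi_j$ if and only if $p_i+i\ge p_j+j$. We intend to prove it under the hypothesis that $\pi$ restricted to positions before $j$ has no $321$, which is what an induction along the support would supply. A cleaner route is to prove each direction of the lemma separately.

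\emph{($\Leftarrow$) Increasing condition implies avoidance.} Suppose (\ref{eqn:321-postive-increase}) holds but $\pi$ contains $321$. Take $j$ with $p_j>0$ and $i<j$ with $\pi_i>\pi_j$. Then $p_i\ge 1$ as well, since $\pi_j$ lies to the right of $\pi_i$ and is smaller, so both $i,j$ are in the support. Count the entries to the right of $i$ that are smaller than $\pi_i$. These include $\pi_j$, together with the $p_j$ entries right of $j$ that are smaller than $\pi_j$, hence smaller than $\pi_i$. Choose $j$ to be the \emph{largest} support index such that a larger entry appears to its left. Then every index strictly between $i$ and $j$ is either a zero of $p$, or a support index $m$ with no larger entry to its left, so in particular $\pi_m>\pi_i$. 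We then want to show $p_i \ge p_j + (j-i)$ minus the number of such $m$. Handling these intermediate support positions is the main obstacle, and this choice of $j$ may not be the right one. My first attempt will instead be to choose $i$ as the nearest support index to the left of $j$ with $\pi_i>\pi_j$, and compare consecutive support indices directly. The claim to establish is that every non-support index $m$ between consecutive support indices $s_t<s_{t+1}$ is a zero of $p$, so $\pi_m$ is smaller than everything to its right, in particular smaller than $\pi_{s_{t+1}}$. If also $\pi_{s_t}>\pi_{s_{t+1}}$, then all $s_{t+1}-s_t-1$ intermediate zeros, the entry $\pi_{s_{t+1}}$, and the $p_{s_{t+1}}$ entries below it all count toward $p_{s_t}$, except for overlaps, and we must rule those out. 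This yields $p_{s_t}\ge p_{s_{t+1}}+s_{t+1}-s_t$, contradicting (\ref{eqn:321-postive-increase}). The remaining case is to reduce a general violating pair to a consecutive one, which we do by noting that left-to-right maxima of $\pi$ occur only at support positions or at zeros.

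\emph{($\Rightarrow$) Avoidance implies the increasing condition.} If $\pi$ avoids $321$, then every support entry is a left-to-right maximum. Hence for consecutive support indices we have $\pi_{s_t}<\pi_{s_{t+1}}$, and everything smaller than $\pi_{s_t}$ to its right is also smaller than $\pi_{s_{t+1}}$. Moreover, the zeros between $s_t$ and $s_{t+1}$ are values below $\pi_{s_t}$: a zero there cannot exceed $\pi_{s_t}$, since then it would be a left-to-right maximum with a smaller entry to its right, forcing it to be nonzero. So we need to show that these zeros are counted in $p_{s_t}$ but not in $p_{s_{t+1}}$, while every element counted in $p_{s_t}$ beyond position $s_{t+1}$ is also counted in $p_{s_{t+1}}$. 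We also need $\pi_{s_{t+1}}$ itself, which exceeds $\pi_{s_t}$, to add at least one: there is a smaller element to its right, and since $\pi_{s_{t+1}}>\pi_{s_t}$ this element is either some element smaller than $\pi_{s_t}$ (already counted) or some element between the two values (a new one). This gives $p_{s_{t+1}}\ge p_{s_t}-(s_{t+1}-s_t-1)$, which is not yet strict. So the delicate point is to gain the extra $+1$. We plan to obtain it by noting that the values between $\pi_{s_t}$ and $\pi_{s_{t+1}}$ are nonempty: otherwise some zero of $p$ or the count would be forced to coincide, and we will check this carefully using the decoding procedure of Section~\ref{sec:prelim}.
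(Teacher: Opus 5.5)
Your overall strategy (reduce to adjacent support indices and compare by counting) is sound, but each direction has a broken step as written.

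In ($\Rightarrow$), the claim that every zero strictly between $s_t$ and $s_{t+1}$ has value below $\pi_{s_t}$ is false. For example, $\pi=21354$ avoids $321$ and has $p=(1,0,0,1,0)$ and $\supp(p)=\{1,4\}$, yet $\pi_3=3>\pi_1=2$. Your justification also fails: nothing forces such a zero to have a smaller entry to its right, and being a zero means it has none.

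More importantly, you misread your own inequality. $p_{s_{t+1}}\ge p_{s_t}-(s_{t+1}-s_t-1)$ rearranges to $p_{s_{t+1}}+s_{t+1}\ge p_{s_t}+s_t+1$, which is exactly the strict inequality in \eqref{eqn:321-postive-increase}. The ``extra $+1$'' you plan to gain would give $p_{s_{t+1}}+s_{t+1}\ge p_{s_t}+s_t+2$, and that is false. Take $\pi=231$ with $p=(1,1,0)$: here $p_1+1=2$ and $p_2+2=3$. No value lies strictly between $\pi_1=2$ and $\pi_2=3$, so the nonemptiness you intended to prove fails as well.

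The correct step needs only an upper bound and no information about the zeros. Every entry counted by $p_{s_t}$ lies either strictly between $s_t$ and $s_{t+1}$ (at most $s_{t+1}-s_t-1$ of them) or to the right of $s_{t+1}$. Entries to the right of $s_{t+1}$ are also counted by $p_{s_{t+1}}$, since $\pi_{s_{t+1}}>\pi_{s_t}$, and position $s_{t+1}$ itself is not counted. Hence $p_{s_t}\le (s_{t+1}-s_t-1)+p_{s_{t+1}}$, which is what you need.

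In ($\Leftarrow$), your count for an adjacent pair is correct. There are no overlaps to rule out, because the counted positions are distinct. What is missing is the reduction from an arbitrary violation to an adjacent pair. ``Left-to-right maxima occur only at support positions or at zeros'' is vacuous, since every index is one or the other. Also, the nearest support index $i<j$ with $\pi_i>\pi_j$ need not be adjacent to $j$ in $\supp(p)$. The missing line is short: in a $321$ occurrence $\pi_a>\pi_b>\pi_c$ with $a<b<c$, both $a$ and $b$ lie in $\supp(p)$. So $\pi_{s_1},\ldots,\pi_{s_\ell}$ is not increasing and must have an adjacent descent $\pi_{s_t}>\pi_{s_{t+1}}$, to which your count applies.

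You were right to drop the opening claim for non-adjacent support indices. It fails even under your prefix hypothesis: $\pi=35214$ has $p=(2,3,1,0,0)$ and $\pi_1>\pi_3$, but $p_1+1<p_3+3$.

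With these repairs your argument is essentially the paper's. The paper reduces $321$-avoidance to the support subsequence being increasing, using that the zero positions always carry an increasing subsequence. It then handles an adjacent pair $s_t<s_{t+1}$ in both directions at once via the decoding procedure, which performs the same count you do directly from the definition.
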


\begin{proof}
It is well-known that a permutation is 321-avoiding if and only if its entries can be partitioned into (at most) two increasing subsequences. We claim that a valid partitioning of the indices is to use the sets $Z(p) = \{ z_1, z_2, \ldots, z_k \}$ and $\positive{p} = \{ s_1, s_2, \ldots, s_{\ell} \}$. Indeed, after decoding $\pi$ from its Lehmer code $p$ (as described in Section \ref{sec:prelim}), the subsequence $\pi_{z_1}, \pi_{z_2}, \ldots, \pi_{z_k}$ is clearly increasing.
Now suppose that $\pi_{s_1}, \pi_{s_2}, \ldots, \pi_{s_{\ell}}$ is not increasing, so there exists $1 \leq i \leq \ell-1$ such that $\pi_{s_i} > \pi_{s_{i+1}}$. Let $z_{j} = \min \{ z_h \in Z(p) : z_h > s_{i+1} \}$ be the smallest zero set index larger than index $s_{i+1}$. (Index $z_{j}$ exists because $z_{k}=n > s_{i+1}$, so the set under consideration is nonempty.) Then $\pi_{s_{i}} > \pi_{s_{i+1}} > \pi_{z_{j}}$, so $\pi$ contains a 321-pattern, a contradiction.

We now prove that $\pi_{s_1} < \pi_{s_2} < \cdots <  \pi_{s_{\ell}}$ if and only if condition \eqref{eqn:321-postive-increase} holds. Equivalently, we show that for $1 \leq i \leq \ell-1$, we have $\pi_{s_i} < \pi_{s_{i+1}}$ if and only if  $p_{s_i} + s_i < p_{s_{i+1}} + s_{i+1}$.
Suppose that we are decoding $p$, and we have reached index $s_i>0$, with current candidate list $C$. We set $\pi_{s_i}$ to be the $(p_{s_i}+1)$th entry of list $C$, and remove this value from $C$. Next, we encounter $j=s_{i+1}-s_i-1$ indices in $Z(p)$, which account for the smallest $j$ entries of $C$, which we remove after their assignment. Now we have reached index $s_{i+1}$, and we assign value $\pi_{s_{i+1}} > \pi_{s_i}$ if and only if $p_{s_{i+1}} \geq p_{s_i} -j$. The latter inequality is equivalent to $p_{s_{i+1}} + s_{i+1} \geq p_{s_i} + s_i +1$, which is the same as $p_{s_{i+1}} + s_{i+1} > p_{s_i} + s_i$.
\end{proof}

For example, consider $p=(0,2,2,0,0,1,0,1,0)\in \lehmer{9}$. We have $\positive{p}=\{2,3,6,8\}$, and these positions satisfy $p_2+2=4 <  p_3+3=5 < p_6+6=7 < p_8+8 = 9.$ By Lemma \ref{lem:321-postive-increase}, the corresponding permutation $\pi$ is 321-avoiding. Indeed, we can decode $p$ 
to find that $\pi = 145237698$. Note that partitioning $\pi$ into subsequences on $Z(p)$ and $\supp(p)$ yields increasing sequences $12368$ and $4579$, as guaranteed in the proof of the lemma.

The increasing condition \eqref{eqn:321-postive-increase} gives more than a test for $321$-avoidance: it also controls how large each positive entry can be. In particular, the size of a positive entry is bounded by the number of zeros to its right.

%%%%%%%%%%%%%%%%%%%% Maximal %%%%%%%%%%%%%%%%%%%%%

\begin{lemma}
\label{lem:321-zero-bound}
Let $p\in \lehmerpat{n}{321}$ with $\positive{p}=\{s_1,s_2,\ldots,s_{\ell}\}.$ Then for every $1\le j\le \ell$,
\[
p_{s_j}\le |\{z\in Z(p):s_j < z \leq n \}|=n-\ell+j-s_j.
\]
\end{lemma}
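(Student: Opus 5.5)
The bound follows from Lemma \ref{lem:321-postive-increase} combined with the Lehmer code constraint at the last support position. Once we see that $p_{s_j}+s_j$ is bounded by the last support index plus the number of zeros between them, the counting identity finishes the argument.

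First we verify the identity
\[
|\{z\in Z(p): s_j<z\le n\}| = n-\ell+j-s_j .
\]
There are $n-s_j$ indices strictly to the right of $s_j$. Among them, exactly $\ell-j$ lie in $\positive{p}$, namely $s_{j+1},\ldots,s_\ell$. Every other index has $p_i=0$, since $p_n=0$ always and so $n\in Z(p)$. Hence the count of zeros to the right is $(n-s_j)-(\ell-j)$.

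Next we prove the inequality. Lemma \ref{lem:321-postive-increase} gives a strictly increasing sequence of integers $p_{s_j}+s_j<p_{s_{j+1}}+s_{j+1}<\cdots<p_{s_\ell}+s_\ell$. Iterating strict integer inequalities over the $\ell-j$ steps gives
\[
p_{s_j}+s_j\le p_{s_\ell}+s_\ell-(\ell-j).
\]
The Lehmer code condition says $p_{s_\ell}\le n-s_\ell$, so $p_{s_\ell}+s_\ell\le n$. Therefore
\[
p_{s_j}\le n-(\ell-j)-s_j = n-\ell+j-s_j,
\]
which by the identity above equals the number of zeros to the right of $s_j$.

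There is no real obstacle here. The only points needing care are that strict increase of integers gives a gain of at least one per step, and that $n\in Z(p)$ holds. The latter is harmless, since the support set is defined inside $[n-1]$ and $p_n=0$ anyway. The case $j=\ell$ is just the Lehmer bound $p_{s_\ell}\le n-s_\ell$.
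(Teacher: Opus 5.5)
Your proposal is correct and follows essentially the same route as the paper: the strict increase of $p_{s_i}+s_i$ from Lemma~\ref{lem:321-postive-increase} together with the Lehmer bound $p_i+i\le n$ gives $p_{s_j}+s_j\le n-(\ell-j)$. The zeros to the right of $s_j$ are then counted as $(n-s_j)-(\ell-j)$, exactly as in the paper.
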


\begin{proof}
First, we note that for any Lehmer code, we have $p_i + i \leq (n-i)+i = n$ for $1 \leq i \leq n$. Since $p\in \lehmerpat{n}{321}$, Lemma \ref{lem:321-postive-increase} implies that $p_{s_1}+s_1,\ p_{s_2}+s_2,\ \ldots,\ p_{s_{\ell}}+s_{\ell}$ form a strictly increasing sequence of $\ell$ positive integers, all at most $n$. Hence the $j$-th value is at most the $j$-th largest possible value in such a sequence, that is $p_{s_j}+s_j\le n-(\ell-j)$.

It remains to show that $n-\ell+j-s_j$ is the number of zeros strictly to the right of $s_j$. There are $n-s_j$ positions to the right of $s_j$. Of these, exactly $\ell-j$ are positive positions, namely $s_{j+1},s_{j+2}, \ldots,s_{\ell}$. Therefore the number of zero positions strictly to the right of $s_j$ is
\[
(n-s_j)-(\ell-j)=n-\ell+j-s_j.
\]
\end{proof}

For example, for $p=(3,0,2,0,1,0)\in \lehmerpat{6}{321}$, we have $Z(p)=\{2,4,6\}$. Since $\positive{p}=\{1,3,5\}$, Lemma \ref{lem:321-zero-bound} gives
\[
p_1\le |\{2,4,6\}|=3,\qquad p_3\le |\{4,6\}|=2,\qquad p_5\le |\{6\}|=1.
\]
In this case, all three bounds are attained with equality. This example suggests the role of maximality: once the zero set is fixed, a maximal code should attain all of the bounds from Lemma \ref{lem:321-zero-bound}. This gives the following characterization.

\begin{theorem}
\label{thm:maximal_condition_321}
Let $n\geq 2$ and let $p=(p_1,\ldots,p_n)\in \lehmerpat{n}{321}$ with $\positive{p}=\{s_1,\ldots,s_{\ell}\}$. Then $p\in \maxset{n}{321}$ if and only if $s_1=1$ and
\[
p_{s_j}=|\{z\in Z(p):s_j<z\leq n\}| \mbox{ for } 1 \leq j \leq \ell.
\]
That is, every positive entry $p_{s_j}$ is equal to the number of zeros strictly to its right.
\end{theorem}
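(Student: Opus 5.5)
We prove the two directions separately, using Lemma \ref{lem:321-postive-increase} to test membership in $\lehmerpat{n}{321}$ and Lemma \ref{lem:321-zero-bound} for the upper bounds. Write $N(i)=|\{z\in Z(p): i<z\le n\}|$ for the number of zeros strictly to the right of position $i$. Note that $N(s_j)=n-\ell+j-s_j$, so the condition $p_{s_j}=N(s_j)$ for all $j$ says exactly that $p_{s_j}+s_j=n-\ell+j$. In other words, the values $p_{s_j}+s_j$ are the $\ell$ consecutive integers $n-\ell+1,\ldots,n$.

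\textbf{Necessity.} Suppose $p\in\maxset{n}{321}$. First, if $p_{s_j}<N(s_j)$ for some $j$, define $q$ by raising every entry $p_{s_i}$ with $i\ge j$ to $N(s_i)$, keeping the zero set unchanged. Then $q_{s_i}+s_i=n-\ell+i$ for $i\ge j$. For $i<j$, Lemma \ref{lem:321-zero-bound} gives $p_{s_i}+s_i\le n-\ell+i$, so the full sequence is strictly increasing. Also $q_{s_i}\le n-s_i$, so $q$ is a Lehmer code. By Lemma \ref{lem:321-postive-increase}, $q\in\lehmerpat{n}{321}$ and $q\succ p$, contradicting maximality. (Raising only $p_{s_j}$ might break strict increase against $s_{j+1}$, which is why we raise the whole tail.)

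Second, we show $s_1=1$. If $\ell=0$, then $p=0$ is dominated by $(1,0,\ldots,0)$, which is $321$-avoiding since $n\ge2$. Otherwise, if $p_1=0$, set $q_1=1$. With the bounds now tight, $p_{s_1}+s_1=n-\ell+1$. The new support is $\{1,s_1,\ldots,s_\ell\}$, and we need $1+1<p_{s_1}+s_1$, i.e.\ $n-\ell+1>2$, i.e.\ $\ell<n-1$. This holds: $z=n$ is always a zero, and $1$ is a zero by assumption, so $\ell\le n-2$. Hence $q\succ p$ is in $\lehmerpat{n}{321}$, again a contradiction.

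\textbf{Sufficiency.} Suppose $s_1=1$ and all bounds are tight, and suppose $q\in\lehmerpat{n}{321}$ with $q\succeq p$. Then $\supp(q)\supseteq\supp(p)$. We must show $q=p$; the key step is ruling out new support. Suppose $q$ has support $\{t_1<\cdots<t_m\}$ with $m>\ell$. By Lemma \ref{lem:321-postive-increase} applied to $q$, the values $q_{t_i}+t_i$ are strictly increasing and at most $n$.

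Each original $s_j$ is some $t_{i(j)}$. For these positions, $q_{s_j}+s_j\ge p_{s_j}+s_j=n-\ell+j$. Since $s_1=1=t_1$, we have $i(1)=1$. Because $m>\ell$, there is a new support position $t$, which is not $t_1$. Let $s_j$ be the largest original support position less than $t$; it exists since $s_1=1<t$. Then the values attached to $s_j,\ldots,s_\ell$ together with $t$ give at least $\ell-j+2$ strictly increasing values. The smallest of them, at $s_j$, is at least $n-\ell+j$. These $\ell-j+2$ distinct integers all lie in $[n-\ell+j,n]$, which contains only $\ell-j+1$ integers, a contradiction.

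So $\supp(q)=\supp(p)$, and Lemma \ref{lem:321-zero-bound} gives $q_{s_j}\le N(s_j)=p_{s_j}$. Hence $q=p$, and $p$ is maximal.

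The main obstacle is this counting step in the sufficiency direction. The hypothesis $s_1=1$ is exactly what guarantees that any new support position lies to the right of some $s_j$, so it gets squeezed into the already-full interval $[n-\ell+j,n]$.
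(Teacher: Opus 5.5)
Your proof is correct and follows essentially the same route as the paper. For necessity you build a strictly larger code that still satisfies Lemma \ref{lem:321-postive-increase}; for sufficiency you use a pigeonhole argument on the strictly increasing values $q_{t_i}+t_i\le n$ to rule out new support, then finish with Lemma \ref{lem:321-zero-bound}. The differences are cosmetic. The paper raises only the slack entry with the largest index by one instead of raising the whole tail. It proves $s_1=1$ first, using just $p_{s_1}+s_1\ge 3$. It also applies the pigeonhole directly at position $1$, which amounts to your counting argument with $j=1$ and already suffices.
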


Figure \ref{fig:max321} shows the codes in $\Max \lehmerpat{5}{321}$ with their zero sets, illustrating the main point of Theorem \ref{thm:maximal_condition_321}. Once the zero set is fixed, every positive entry is forced to count the zeros to its right. In other words, the zero set  actually determines the code.

\begin{figure}[t]
\begin{center}
\begin{tikzpicture}[scale=.6]

    \newtri[(0,0)]{1,1,1,1}{teal};
    \node at (0,-0.8) {\footnotesize $\{5\}$};

    \newtri[(3,0)]{2,0,1,1}{teal};
    \node at (3,-0.8) {\footnotesize $\{2,5\}$};

    \newtri[(6,0)]{2,2,0,1}{teal};
    \node at (6,-0.8) {\footnotesize $\{3,5\}$};

    \newtri[(9,0)]{2,2,2,0}{teal};
    \node at (9,-0.8) {\footnotesize $\{4,5\}$};

    \newtri[(12,0)]{3,0,0,1}{teal};
    \node at (12,-0.8) {\footnotesize $\{2,3,5\}$};

    \newtri[(15.0,0)]{3,0,2,0}{teal};
    \node at (15.0,-0.8) {\footnotesize $\{2,4,5\}$};

    \newtri[(18,0)]{3,3,0,0}{teal};
    \node at (18,-0.8) {\footnotesize $\{3,4,5\}$};

    \newtri[(21,0)]{4,0,0,0}{teal};
    \node at (21,-0.8) {\footnotesize $\{2,3,4,5\}$};

    \node at (-2, 0.5) {\footnotesize $p$};
    \node at (-2, -0.8) {\footnotesize $Z(p)$};

\end{tikzpicture}
\end{center}
\caption{The triangle representations of the  maximal Lehmer codes in $\maxset{5}{321}$, labeled by their zero sets.}
\label{fig:max321}
\end{figure}

\begin{proof}
First suppose that $p\in \maxset{n}{321}$. If $\positive{p}=\emptyset$, then $p=(0,\ldots,0)$ is clearly not a maximal Lehmer code. Thus $\positive{p}\neq\emptyset$, so $s_1$ is defined. 

If $p_1=0$, then increasing $p_1$ from $0$ to $1$ gives another valid Lehmer code $p'$. We then have $p'_1+1=2<p_{s_1}+s_1=p'_{s_1}+s_1$. By Lemma \ref{lem:321-postive-increase}, the code $p'$ is still $321$-avoiding. This contradicts the maximality of $p$. Therefore $p_1>0$ which means that $s_1=1$.

Now let $s_j\in \positive{p}$. By Lemma \ref{lem:321-zero-bound}, we have $p_{s_j}\le |\{z\in Z(p):z>s_j\}|$. We show that equality holds. Suppose not, and choose the largest $j$ such that $p_{s_j}<|\{z\in Z(p):z>s_j\}|$. By Lemma \ref{lem:321-zero-bound}, this is equivalent to $p_{s_j}+s_j<n-\ell+j$. Increase $p_{s_j}$ by $1$, leaving all other entries unchanged, to obtain a code $p'$. Since
\[
p'_{s_j}+s_j=p_{s_j}+1+s_j\le n-\ell+j\le n,
\]
the code $p'$ is still a valid Lehmer code. Clearly $\positive{p'}=\positive{p}$. If $j>1$, then
\[
p'_{s_j}+s_j>p_{s_j}+s_j>p_{s_{j-1}}+s_{j-1} =p'_{s_{j-1}}+s_{j-1}
\]
by Lemma \ref{lem:321-postive-increase}. If $j<\ell$, then the maximal choice of $j$ gives
\[
p_{s_{j+1}}+s_{j+1}=n-\ell+j+1,
\]
so
\[
p'_{s_j}+s_j=p_{s_j}+1+s_j\le n-\ell+j < p_{s_{j+1}}+s_{j+1}=p'_{s_{j+1}}+s_{j+1}.
\]
Thus the values
\[
p'_{s_1}+s_1,\ p'_{s_2}+s_2,\ \ldots,\ p'_{s_{\ell}}+s_{\ell}
\]
are still strictly increasing. By Lemma \ref{lem:321-postive-increase}, code $p'$ is $321$-avoiding, contradicting the maximality of $p$. Hence $p_{s_j}=|\{z\in Z(p):z>s_j\}|$ for all $1\le j\le \ell$.

Conversely, suppose that $p\in \lehmerpat{n}{321}$, $s_1=1$, and $p_{s_j}=|\{z\in Z(p):s_j<z\le n\}|$ for $1\le j\le \ell$. By Lemma \ref{lem:321-zero-bound}, this is equivalent to $p_{s_j}+s_j=n-\ell+j$ for $1\le j\le \ell$.

To prove maximality, let $q\in \lehmerpat{n}{321}$ satisfy $p\preceq q$, and write $\positive{q}=\{t_1,t_2,\cdots,t_m\}$ where $t_1 < t_2 < \cdots < t_m$. Since $p\preceq q$, we have $\positive{p}\subseteq \positive{q}$, so $m\ge \ell$. Also $s_1=1$, so $t_1=1$. As in the proof of Lemma \ref{lem:321-zero-bound}, the values
\[
q_{t_1}+t_1,\ q_{t_2}+t_2,\ldots, q_{t_m}+t_m
\]
form a strictly increasing sequence, all at most $n$. Hence
\[
q_{t_i}+t_i\le n-m+i
\]
for $1\le i \le m$.

If $m>\ell$, then $q_1+1\le n-m+1<n-\ell+1=p_1+1$, contradicting $q_1\ge p_1$. Thus $m=\ell$ and
\[
\positive{q}=\positive{p}=\{s_1,\ldots,s_{\ell}\}.
\]
Therefore, for every $1\le j\le \ell$,
\[
q_{s_j}+s_j\le n-\ell+j=p_{s_j}+s_j,
\]
so $q_{s_j}\le p_{s_j}$. Since $p\preceq q$, equality holds on $\positive{p}$. Hence $q=p$, which means that $p\in \maxset{n}{321}$.
\end{proof}

\begin{cor}
\label{cor:maximal_condition_321}
Let $Z \subset [n]$ such that $1 \notin Z$ and $n \in Z$. Then there exists a unique $p \in \maxset{n}{321}$ such that $Z(p)=Z$.
\end{cor}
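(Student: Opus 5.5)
Given $Z\subset[n]$ with $1\notin Z$ and $n\in Z$, I will define $p$ directly by the prescription of Theorem \ref{thm:maximal_condition_321}. Set $p_i=0$ for $i\in Z$. For $i\notin Z$, set
\[
p_i=|\{z\in Z: i<z\le n\}|.
\]
The plan is to check that $p$ is a Lehmer code, that it lies in $\lehmerpat{n}{321}$, and that $Z(p)=Z$. Theorem \ref{thm:maximal_condition_321} then gives maximality, and the same theorem gives uniqueness.

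First I verify that $p\in\lehmer{n}$ with $Z(p)=Z$. For $i\notin Z$ we have $i<n$ because $n\in Z$. So the zero at position $n$ is counted, which gives $p_i\ge 1$. The count $p_i$ is at most the number of positions to the right of $i$, namely $n-i$. Hence $1\le p_i\le n-i$ for $i\notin Z$, and $p_i=0$ exactly when $i\in Z$. This shows $Z(p)=Z$ and $\positive{p}=[n]\setminus Z=\{s_1,\ldots,s_\ell\}$. Also $s_1=1$, since $1\notin Z$.

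Next I show $321$-avoidance via Lemma \ref{lem:321-postive-increase}. The counting step in the proof of Lemma \ref{lem:321-zero-bound} applies to any code, since it only uses $\positive{p}$. It shows that the number of zeros to the right of $s_j$ is $n-\ell+j-s_j$. Therefore
\[
p_{s_j}+s_j=n-\ell+j,
\]
which is strictly increasing in $j$. So condition \eqref{eqn:321-postive-increase} holds and $p\in\lehmerpat{n}{321}$. Now $p$ satisfies the hypotheses of the converse direction of Theorem \ref{thm:maximal_condition_321}: we have $s_1=1$, and each positive entry equals the number of zeros to its right. Hence $p\in\maxset{n}{321}$.

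For uniqueness, let $q\in\maxset{n}{321}$ with $Z(q)=Z$. By the forward direction of Theorem \ref{thm:maximal_condition_321}, every positive entry of $q$ equals the number of elements of $Z(q)=Z$ to its right, and every other entry is zero. So $q$ is completely determined by $Z$, and $q=p$.

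The only step needing care is the one where $n\in Z$ guarantees $p_i\ge1$ off $Z$. Without it, $Z(p)$ could be strictly larger than $Z$. I do not expect a real obstacle anywhere in the argument.
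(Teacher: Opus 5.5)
Your proposal is correct and follows the paper's proof essentially step for step. You define $p$ by counting the elements of $Z$ to the right of each position, get $p_{s_j}+s_j=n-\ell+j$ so that Lemma \ref{lem:321-postive-increase} gives $321$-avoidance, and then apply Theorem \ref{thm:maximal_condition_321} for both maximality and uniqueness. Your explicit check that $1\le p_i\le n-i$ for $i\notin Z$ (using $n\in Z$), so that $p$ is a valid Lehmer code with $Z(p)=Z$, fills in a detail the paper only asserts with ``observe that $\positive{p}=\overline{Z}$.''
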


\begin{proof}
Write $\overline{Z} = [n]\backslash Z=\{s_1, s_2, \ldots,s_{\ell}\}$ where $s_1< s_2 < \cdots<s_{\ell}$. Define $p=(p_1,\ldots,p_n)$ by setting $p_i=0$ for $i\in Z$ and $p_{s_j}=|\{z\in Z:s_j<z\leq n\}|$ for $1\leq j\leq \ell$; observe that $\positive{p}=\overline{Z}$. Furthermore, since $1 \in \overline{Z}$ and $n\in Z$, we have $s_1=1$. Also $p_{s_j}=n-\ell+j-s_j$, so $p_{s_j}+s_j=n-\ell+j$. Hence $p\in\lehmerpat{n}{321}$ by Lemma \ref{lem:321-postive-increase}, and $p\in\maxset{n}{321}$ by Theorem \ref{thm:maximal_condition_321}. Uniqueness follows from the same theorem.
\end{proof}

For example, let $Z=\{3,5,6,8\}\subseteq [8]$. By Corollary \ref{cor:maximal_condition_321}, the corresponding $p \in \maxset{8}{321}$ is $p=(4,4,0,3,0,0,1,0)$. This is the Lehmer code for the 321-avoiding permutation $\pi=56172384$. 
The example reflects the general situation. Once we require $1\notin Z$ and $n\in Z$, there are no further restrictions on the zero set. Thus counting maximal $321$-avoiding Lehmer codes is the same as counting these possible zero sets.

\begin{theorem*}[Theorem \ref{thm:number_maximal_321}]
For $n\geq 2$, we have $|\maxset{n}{321}|=2^{n-2}$.
\end{theorem*}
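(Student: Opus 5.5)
The plan is to establish a bijection between $\maxset{n}{321}$ and the family
$$
\mathcal{Z}_n = \{ Z \subseteq [n] : 1 \notin Z,\ n \in Z \},
$$
given by the zero-set map $p \mapsto Z(p)$. Since a set in $\mathcal{Z}_n$ is determined by an arbitrary choice of which elements of $\{2,\ldots,n-1\}$ it contains, we have $|\mathcal{Z}_n| = 2^{n-2}$. The theorem then follows once the bijection is in place.

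First, I will check that the map is well defined, that is, $Z(p) \in \mathcal{Z}_n$ for every $p \in \maxset{n}{321}$. The condition $1 \notin Z(p)$ comes directly from Theorem \ref{thm:maximal_condition_321}, which forces $s_1 = 1$, so $p_1 > 0$. The condition $n \in Z(p)$ holds for every Lehmer code, because $0 \le p_n \le n - n = 0$.

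Next, I will show the map is surjective and injective, both via Corollary \ref{cor:maximal_condition_321}. Given any $Z \in \mathcal{Z}_n$, the corollary produces some $p \in \maxset{n}{321}$ with $Z(p) = Z$, which gives surjectivity. The uniqueness part of the corollary gives injectivity: if $p, q \in \maxset{n}{321}$ satisfy $Z(p) = Z(q)$, then both are the unique maximal code with that zero set, so $p = q$. One could also argue injectivity directly from Theorem \ref{thm:maximal_condition_321}, since every positive entry of a maximal code equals the number of zeros to its right and is therefore determined by the zero set.

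There is no genuine obstacle here, since all the substantive work was done in Theorem \ref{thm:maximal_condition_321} and its corollary. The only points needing care are the observation that $n$ always lies in the zero set, and confirming the count $2^{n-2}$ at the boundary case $n = 2$. In that case the only choice is $Z = \{2\}$, giving the single maximal code $(1,0)$, which agrees with $2^{0} = 1$.
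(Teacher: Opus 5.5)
Your proposal is correct and follows the same route as the paper: the paper's proof is exactly the zero-set bijection between $\maxset{n}{321}$ and $\{Z \subseteq [n] : 1 \notin Z,\ n \in Z\}$, obtained from Theorem \ref{thm:maximal_condition_321} and Corollary \ref{cor:maximal_condition_321}. You simply spell out the details the paper leaves implicit, namely that the map is well defined (using $s_1=1$ and $p_n=0$) and that existence and uniqueness in the corollary give surjectivity and injectivity.
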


\begin{proof}
By Theorem \ref{thm:maximal_condition_321} and Corollary \ref{cor:maximal_condition_321}, $\maxset{n}{321}$ is in bijection with $\{ Z \subset [n]: 1 \notin Z \mbox{ and } n \in Z\}$. There are $2^{n-2}$ such subsets.
\end{proof}

We now use the zero set description to determine the maximum weight of a 321-avoiding Lehmer code. 

\begin{theorem*}[Theorem \ref{thm:maximum_weight_321}]
For $n \geq 2$, we have
$$
\max \{ \inv(\pi) : \pi \in \permpat{n}{321}  \} = 
\max \{ w(p) : p \in \lehmerpat{n}{321} \}
=
\left\lfloor \frac{n^2}{4} \right\rfloor.
$$
Moreover, if $n=2r$ is even, the unique maximum weight element is
\[
p=(\underbrace{r,\ldots,r}_{r}, \underbrace{0,\ldots,0}_{r}),
\]
while if $n=2r+1$ is odd, the maximum weight elements are
\[
p=(\underbrace{r,\ldots,r}_{r+1}, \underbrace{0,\ldots,0}_{r}) \qquad \text{ and } \qquad p =  (\underbrace{r+1,\ldots,r+1}_{r}, \underbrace{0,\ldots,0}_{r+1}).
\]
\end{theorem*}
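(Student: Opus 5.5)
Since weight is monotone under $\preceq$, every maximum weight code is maximal, so it suffices to optimize $w$ over $\maxset{n}{321}$. By Theorem \ref{thm:maximal_condition_321} and Corollary \ref{cor:maximal_condition_321}, such a code is determined by its zero set $Z$ with $1\notin Z$, $n\in Z$, and each positive entry equals the number of zeros to its right. Summing over positive positions therefore gives
\[
w(p)=\sum_{s\in \positive{p}} |\{z\in Z(p): z>s\}| = \#\{(s,z): s\in\positive{p},\ z\in Z(p),\ s<z\}.
\]
In other words, $w(p)$ counts the pairs consisting of a positive position followed later by a zero position. The first equality in the statement, $\max\inv(\pi)=\max w(p)$, is immediate from $\inv\pi=w(L(\pi))$ and the bijection $L$.

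Let $\ell=|\positive{p}|$ and $k=n-\ell=|Z(p)|$. The number of such pairs is at most $\ell k$, with equality if and only if every positive position precedes every zero position, that is, $\positive{p}=\{1,\ldots,\ell\}$ and $Z(p)=\{\ell+1,\ldots,n\}$. Since $1\le \ell\le n-1$, the product $\ell(n-\ell)$ is at most $\lfloor n^2/4\rfloor$. For $n=2r$ equality holds only at $\ell=r$; for $n=2r+1$ it holds only at $\ell\in\{r,r+1\}$. Together, these two equality conditions force the code to be $(k,\ldots,k,0,\ldots,0)$ with $\ell$ copies of $k$. This yields exactly the listed codes: $(r^{r},0^{r})$ in the even case, and $(r^{r+1},0^{r})$ and $((r+1)^{r},0^{r+1})$ in the odd case. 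Each of these is a valid $321$-avoiding code by Lemma \ref{lem:321-postive-increase}, since the values $p_{s_j}+s_j=k+j$ are strictly increasing.

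The only step needing care is uniqueness: I must make sure that any maximum weight element is maximal in the poset, so that the zero-set description applies. This holds because if $p\preceq q$ with $p\neq q$, then $w(q)>w(p)$, so a maximum weight code cannot lie strictly below another code in $\lehmerpat{n}{321}$. Once this is settled, the rest is the elementary counting and AM--GM equality analysis above.
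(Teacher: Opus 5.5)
Your proposal is correct and follows essentially the same route as the paper: restrict to maximal codes, use the zero-set description to write $w(p)$ as a count of (positive position, later zero) pairs, bound by $k(n-k)$ with equality exactly when all positive positions precede all zeros, and optimize over $k$. You are also more explicit than the paper in two places: you justify why a maximum weight code must be maximal (strict monotonicity of $w$ under $\preceq$), and you check that the listed codes actually lie in $\lehmerpat{n}{321}$.
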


\begin{figure}[t]
\begin{center}
\begin{tikzpicture}[scale=.6]

    % n = 2
    \newtri[(0,0)]{1}{teal};
    \node at (0,-0.8) {\footnotesize $(1,0)$};
    \node at (0,-1.55) {\footnotesize $n=2$};

    % n = 3
    \newtri[(3.2,0)]{1,1}{teal};
    \node at (3.2,-0.8) {\footnotesize $(1,1,0)$};

    \newtri[(5.8,0)]{2,0}{teal};
    \node at (5.8,-0.8) {\footnotesize $(2,0,0)$};

    \node at (4.5,-1.55) {\footnotesize $n=3$};

    % n = 4
    \newtri[(9.2,0)]{2,2,0}{teal};
    \node at (9.2,-0.8) {\footnotesize $(2,2,0,0)$};
    \node at (9.2,-1.55) {\footnotesize $n=4$};

    % n = 5
    \newtri[(13.0,0)]{2,2,2,0}{teal};
    \node at (13.0,-0.8) {\footnotesize $(2,2,2,0,0)$};

    \newtri[(16.2,0)]{3,3,0,0}{teal};
    \node at (16.2,-0.8) {\footnotesize $(3,3,0,0,0)$};

    \node at (14.6,-1.55) {\footnotesize $n=5$};

    % n = 6
    \newtri[(20.4,0)]{3,3,3,0,0}{teal};
    \node at (20.4,-0.8) {\footnotesize $(3,3,3,0,0,0)$};
    \node at (20.4,-1.55) {\footnotesize $n=6$};

\end{tikzpicture}
\end{center}
\caption{The triangle representations of the maximum weight $321$-avoiding Lehmer codes for $2\leq n\leq 6$. For odd $n$, there are two maximum weight elements. For even $n$, the maximum weight element is unique.}
\label{fig:maxweight321examples}
\end{figure}

Figure \ref{fig:maxweight321examples} visualizes maximum weight Lehmer codes using our triangular representation. When $n=2r$ is even, the nonzero entries form an $r \times r$ square. When $n=2r+1$ is odd, the nonzero entries form either an $(r+1) \times r$ rectangle or an $r \times (r+1)$ rectangle.  

\begin{proof}
When $\pi$ has Lehmer code $p$, we have $\inv(\pi)=w(p)$, as noted in Definition \ref{def:lehmer}. This explains the first equality.

By Theorem \ref{thm:maximal_condition_321}, every maximal element $p\in \maxset{n}{321}$ is determined by its zero set $Z(p)$. Its weight is
\[
w(p)=\sum_{i \in \positive{p}} p_i = \sum_{i \in \positive{p}} |\{z\in Z(p): i < z \leq n \}|.
\]
Equivalently, $w(p)$ counts pairs $(i,z)$ such that $i<z$ where $i\in \positive{p}$ and $z\in Z(p)$.

Let $|Z(p)|=k$. We have $w(p)\leq k(n-k)$, with equality holding exactly when every nonzero position lies to the left of every zero position. In this case the corresponding maximal code is
\[
p=(\underbrace{k,\ldots,k}_{n-k}, \underbrace{0,\ldots,0}_{k}),
\]
Therefore
\[
\max_{p \in \lehmerpat{n}{321}} w(p) =\max_{1\leq k\leq n-1} k(n-k) =\left\lfloor \frac{n^2}{4}\right\rfloor.
\]
If $n=2r$ is even, this maximum is attained uniquely at $k=r$, and if $n=2r+1$, then this maximum is attained at both $k=r$ and $k=r+1$.
\end{proof}

\section{Lehmer Codes for Consecutive \underline{321}-Avoiding Permutations}

\label{sec:consec-321}

In this section, we turn to consecutive $\underline{321}$-avoiding permutations. 
We first translate $\underline{321}$-avoidance into a condition on descents in the Lehmer code.  We then show that every maximal code $p \in \maxset{n}{\underline{321}}$ is determined by its descent set $D(p)$. This  leads to enumeration of these maximal codes by the Padovan sequence. 

%%%%%%%%%%%%%%%Lehmer Condition%%%%%%%%%%%%%%%

We begin with a basic observation relating permutation descents to Lehmer code descents. 
\begin{lemma}
\label{lem:adjacent-decrease}
    Let $\pi\in S_n$ have Lehmer code $p=(p_1,\ldots,p_n)$. Then for every $1\le i\le n-1$, we have
    \[
    \pi_i>\pi_{i+1}
    \qquad \mbox{if and only if}\qquad
    p_i>p_{i+1}.
    \]
    Equivalently, the descents of $\pi$ correspond to the descents of its Lehmer code $p$.
\end{lemma}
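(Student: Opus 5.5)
The approach is to compare the two counts $p_i$ and $p_{i+1}$ directly, using that they are taken over almost the same set of later positions. By definition,
\[
p_i = |\{ j > i : \pi_j < \pi_i\}|, \qquad p_{i+1} = |\{ j > i+1 : \pi_j < \pi_{i+1}\}|.
\]
Split the set defining $p_i$ according to whether $j = i+1$ or $j > i+1$. This gives
\[
p_i = [\pi_{i+1} < \pi_i] + |\{ j > i+1 : \pi_j < \pi_i\}|,
\]
where $[\cdot]$ is $1$ if the condition holds and $0$ otherwise. Both $p_i$ and $p_{i+1}$ now count elements of the common tail $\{\pi_j : j > i+1\}$ that lie below a threshold, plus possibly one extra term.

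\textbf{Case $\pi_i > \pi_{i+1}$.} Every $j > i+1$ with $\pi_j < \pi_{i+1}$ also satisfies $\pi_j < \pi_i$. So the tail count for $\pi_i$ is at least the tail count for $\pi_{i+1}$, which is $p_{i+1}$. The indicator term contributes an extra $1$, hence $p_i \geq p_{i+1} + 1 > p_{i+1}$.

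\textbf{Case $\pi_i < \pi_{i+1}$.} The indicator term is $0$. Every $j > i+1$ with $\pi_j < \pi_i$ also satisfies $\pi_j < \pi_{i+1}$. Hence $p_i \leq p_{i+1}$, so $p_i$ is not greater than $p_{i+1}$.

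The two cases are exhaustive because $\pi_i \neq \pi_{i+1}$. Together they give the biconditional $\pi_i > \pi_{i+1} \iff p_i > p_{i+1}$. The statement that descents of $\pi$ correspond to descents of $p$ is just a restatement of this.

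There is no real obstacle here. The only care needed is the bookkeeping of the single index $j = i+1$, which appears in the set for $p_i$ but not in the set for $p_{i+1}$.
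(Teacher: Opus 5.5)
Your proposal is correct and follows the same argument as the paper: in each of the two cases you compare the counts over the common tail $j>i+1$ and account separately for the index $j=i+1$. Your indicator-function bookkeeping simply makes explicit the step the paper states in words.
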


\begin{proof}
    Recall that $p_i$ counts the number of entries to the right of $\pi_i$ that are smaller than $\pi_i$.
    
    First suppose that $\pi_i>\pi_{i+1}$. Every entry to the right of $\pi_{i+1}$ that is smaller than $\pi_{i+1}$ is also smaller than $\pi_i$. In addition, $\pi_{i+1}$ itself is to the right of $\pi_i$ and is smaller than $\pi_i$. Hence, $p_i\ge p_{i+1}+1,$ so $p_i>p_{i+1}$.
    
    Conversely, suppose that $\pi_i<\pi_{i+1}$. Every entry to the right of $\pi_i$ that is smaller than $\pi_i$ lies to the right of $\pi_{i+1}$ and is also smaller than $\pi_{i+1}$. Therefore, $p_i\le p_{i+1}$.
\end{proof}

Using this observation twice, an occurrence of the consecutive pattern $\underline{321}$ in permutation $\pi$ corresponds to three consecutive entries of the Lehmer code $p=L(\pi)$ in decreasing order.

\begin{theorem}
\label{thm:-321-avoiding-lehmer}
A permutation $\pi \in \perm{n}$ is $\underline{321}$-avoiding if and only if its Lehmer code $p=(p_1,\ldots,p_n)\in \lehmer{n}$ satisfies the following condition:
\begin{equation}
\label{eqn:-321-condition}
\mbox{if } p_i>p_{i+1} \mbox{ then } p_{i+1}\leq p_{i+2} \mbox{ for } 1 \leq i \leq n-2.
\end{equation}
\end{theorem}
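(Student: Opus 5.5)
The plan is to reduce the statement directly to Lemma \ref{lem:adjacent-decrease}. By definition, $\pi$ contains the consecutive pattern $\underline{321}$ exactly when there is an index $1 \leq i \leq n-2$ with $\pi_i > \pi_{i+1} > \pi_{i+2}$. So $\pi$ is $\underline{321}$-avoiding if and only if, for every such $i$, it is not the case that both $\pi_i>\pi_{i+1}$ and $\pi_{i+1}>\pi_{i+2}$.

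The key step is to translate each of the two adjacent comparisons into the code. Applying Lemma \ref{lem:adjacent-decrease} at position $i$ gives $\pi_i>\pi_{i+1}$ if and only if $p_i>p_{i+1}$. Applying it at position $i+1$, which is valid since $i+1\leq n-1$, gives $\pi_{i+1}>\pi_{i+2}$ if and only if $p_{i+1}>p_{i+2}$. Hence $\pi_i>\pi_{i+1}>\pi_{i+2}$ holds exactly when $p_i>p_{i+1}>p_{i+2}$. In other words, occurrences of $\underline{321}$ in $\pi$ correspond index by index to strictly decreasing consecutive triples in $p$.

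Finally, I would negate this. The absence of any $i$ with $p_i>p_{i+1}>p_{i+2}$ is the statement that, whenever $p_i>p_{i+1}$, we do not have $p_{i+1}>p_{i+2}$. Since the entries are integers, this means $p_{i+1}\leq p_{i+2}$, which is exactly condition \eqref{eqn:-321-condition}. Because $L$ is a bijection, this also characterizes $\lehmerpat{n}{\underline{321}}$ as the set of codes in $\lehmer{n}$ satisfying \eqref{eqn:-321-condition}.

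There is no real obstacle. The only point needing care is that Lemma \ref{lem:adjacent-decrease} is a genuine equivalence, including the case $\pi_i<\pi_{i+1}$ giving $p_i\leq p_{i+1}$, so the translation works in both directions of the if-and-only-if.
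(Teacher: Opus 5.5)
Your proof is correct and is essentially the paper's argument: you apply Lemma \ref{lem:adjacent-decrease} at positions $i$ and $i+1$ to identify occurrences $\pi_i>\pi_{i+1}>\pi_{i+2}$ with strictly decreasing triples $p_i>p_{i+1}>p_{i+2}$, and then negate to get condition \eqref{eqn:-321-condition}. One minor remark: passing from $\neg(p_{i+1}>p_{i+2})$ to $p_{i+1}\le p_{i+2}$ holds in any total order, so you do not need the entries to be integers.
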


\begin{proof}
The permutation $\pi$ contains an occurrence of $\underline{321}$ starting at position $i$ exactly when $\pi_i>\pi_{i+1}>\pi_{i+2}$. By Lemma \ref{lem:adjacent-decrease} (twice), this is equivalent to $p_i>p_{i+1}>p_{i+2}$. Therefore $\pi$ avoids $\underline{321}$ if and only if condition \eqref{eqn:-321-condition} holds.
\end{proof}

One immediate consequence is that the descent set of a $\underline{321}$-avoiding code has no consecutive indices.

\begin{cor}
\label{cor:descent-increase-by-2}
Let $p\in\lehmer{n}$ with descent set $D(p)=\{d_1,\ldots,d_m\}$. Then $p\in\lehmerpat{n}{\underline{321}}$ if and only if $d_{i+1}-d_i\ge 2$ for $1\le i\le m-1$.
\end{cor}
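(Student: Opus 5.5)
This is a direct reformulation of Theorem \ref{thm:-321-avoiding-lehmer}. The plan is to show that condition \eqref{eqn:-321-condition} holds for $p$ exactly when $D(p)$ contains no two consecutive integers. Since the elements $d_1<\cdots<d_m$ are listed in increasing order, "no two consecutive integers" is the same as $d_{i+1}-d_i\ge 2$ for all $1\le i\le m-1$.

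First, I would restate \eqref{eqn:-321-condition} in terms of descents. By the definition of $D(p)$, $p_i>p_{i+1}$ means $i\in D(p)$. Likewise, $p_{i+1}\le p_{i+2}$ means $i+1\notin D(p)$, for $1\le i\le n-2$, in which range $i+1\le n-1$ is a legitimate index for $D(p)$. So \eqref{eqn:-321-condition} says exactly that there is no $i\in[n-2]$ with both $i$ and $i+1$ in $D(p)$.

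Second, I would check that this is equivalent to the gap condition.
\begin{itemize}
\item If some $d_{i+1}-d_i=1$, then $d_i$ and $d_i+1$ are both descents. Since $d_i+1\le n-1$, we have $d_i\le n-2$, so \eqref{eqn:-321-condition} fails at the index $d_i$.
\item Conversely, if $j,j+1\in D(p)$ for some $j$, these are consecutive elements of the increasing listing of $D(p)$, since no integer lies strictly between them. Hence some gap $d_{i+1}-d_i$ equals $1$.
\end{itemize}

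Finally, Theorem \ref{thm:-321-avoiding-lehmer} says that $p\in\lehmerpat{n}{\underline{321}}$ if and only if \eqref{eqn:-321-condition} holds, which combined with the two steps above completes the argument. There is no real obstacle here; the only care needed is the index range $1\le i\le n-2$ versus $D(p)\subseteq[n-1]$, handled in the first bullet.
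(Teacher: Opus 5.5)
Your proposal is correct and follows essentially the same route as the paper: both rewrite condition \eqref{eqn:-321-condition} as the statement that $D(p)$ contains no two consecutive integers, then invoke Theorem \ref{thm:-321-avoiding-lehmer}. Your version also spells out the index bookkeeping ($i\in[n-2]$ versus $D(p)\subseteq[n-1]$) that the paper leaves implicit.
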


\begin{proof}
By Theorem \ref{thm:-321-avoiding-lehmer}, $p\in\lehmerpat{n}{\underline{321}}$ if and only if there is no
$i\in[n-2]$ such that $p_i>p_{i+1}>p_{i+2}$. This is equivalent to $p$ not having descents at both $i$ and $i+1$, which holds if and only if $d_{i+1}-d_i\ge 2$ for $1\le i\le m-1$.
\end{proof}

%%%%%%%%%%%%%%%Maximal Condition%%%%%%%%%%%%%%%

We now characterize the maximal codes $p \in \Max \lehmerpat{n}{\underline{321}}$. Our next lemma shows that a maximal $p$ is weakly decreasing. Furthermore,  each descent $d \in D(p)$ must attain its largest possible Lehmer code value $p_d = n-d$. We also want to have as many descents as possible, subject to the non-adjacency condition. 

\begin{lemma}
\label{lem:maximal_condition_consec_321}
For $n \geq 2$, let $p=(p_1,\ldots,p_n)\in \lehmer{n}$ with $D(p)=\{d_1,d_2, \ldots, d_m\}$. Then $p \in \Max \lehmerpat{n}{\underline{321}}$ if and only if the following conditions hold.
\begin{enumerate}
    \item[(a)] We have
\begin{equation}
\label{eqn:desc_set_consec_321}
          d_1\in\{1,2\},\qquad d_{j+1}-d_j\in\{2,3\}\text{ for }1\le j \le m-1, \qquad d_m\in\{n-2,n-1\}.  
\end{equation}

    \item[(b)] For $1\le j\le m$, $p_{d_j}=n-d_j$.
    
    \item[(c)] The code $p$ is constant on the subintervals
    \[
        [1,d_1],\quad [d_1+1,d_2],\quad [d_2+1,d_3],\quad \ldots,\quad [d_{m-1}+1,d_m],\quad [d_m+1,n],
    \]
    which we call the blocks of $p$.
\end{enumerate}
\end{lemma}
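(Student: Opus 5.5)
Throughout, the only membership test I will use is Corollary \ref{cor:descent-increase-by-2}: a code lies in $\lehmerpat{n}{\underline{321}}$ exactly when no two descents are adjacent. I will prove necessity of (a)--(c) by a sequence of local "raise one entry" moves, each of which stays in $\lehmerpat{n}{\underline{321}}$ and so contradicts maximality. I will then prove sufficiency by showing that any $q \succeq p$ in $\lehmerpat{n}{\underline{321}}$ must equal $p$.

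\emph{Necessity.} Let $p$ be maximal.

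First I show $p$ is weakly decreasing. Suppose $p_i<p_{i+1}$ for some $i$. Raise $p_i$ to $p_{i+1}$; this is legal because $p_{i+1}\le n-i-1<n-i$. Position $i$ is not a descent afterwards, and the only new descent could be at $i-1$, which happens only if $p_{i-1}>p_{i+1}$. In that case I raise $p_i$ further, to $\min(p_{i-1},n-i)\ge p_{i+1}$, so that $p_{i-1}\le p_i$ and $i-1$ is not a descent. The new $i$ may now be a descent, but $i-1$ and $i+1$ must then be non-descents, unless $p_{i+1}>p_{i+2}$. In that subcase I choose the value of $p_i$ to be exactly $p_{i+1}$ and check that $i-1$ stays a non-descent. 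I expect this case bookkeeping to be the main obstacle, and I would organise it by whether $i-1$ and $i+1$ are descents of $p$. A cleaner alternative I would try first: with $i$ the largest ascent, raising $p_i$ to $p_{i+1}$ creates no new descent at $i$, and I argue directly about $i-1$.

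Next, (c) follows from monotonicity, since a weakly decreasing code is constant between consecutive descents.

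For (b): in a weakly decreasing code, raising $p_{d_j}$ keeps $d_j$ a descent and cannot create a descent at $d_j-1$ as long as it stays at most the block value to its left, which is at least $p_{d_j}$. So for $d_j$ not the last index of a block, I instead raise the entire block $[d_{j-1}+1,d_j]$ by one. This keeps the descent set unchanged except possibly removing $d_{j-1}$, which is harmless. The raise is blocked only when some entry of the block is at its cap $n-i$. Because the block is constant and the cap $n-i$ decreases in $i$, the binding entry is the last one, so $p_{d_j}=n-d_j$. The same argument applied to the final block $[d_m+1,n]$, whose cap at $n$ is $0$, forces that block to be zero. Combining this with the block constant gives (b), and as a consequence every block value equals $n$ minus the block's right endpoint.

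For (a): a weakly decreasing code has no descents with gaps larger than $3$. If $d_{j+1}-d_j\ge4$, or $d_1\ge3$, or $d_m\le n-3$, then I insert a new descent at a position $d$ in the middle of a long block, set $p_d=n-d$, and raise the earlier part of that block accordingly. The result stays non-adjacent and dominates $p$ strictly, contradicting maximality. The edge conditions $d_1\in\{1,2\}$ and $d_m\in\{n-2,n-1\}$ come out the same way, using $p_n=0$.

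\emph{Sufficiency.} Suppose $p$ satisfies (a)--(c), and let $q\succeq p$ lie in $\lehmerpat{n}{\underline{321}}$. I compare block by block from the right. The entries of $p$ are as large as possible at each block end, since $p_{d_j}=n-d_j$ is the cap, so $q$ agrees with $p$ at every $d_j$. Inside a block, $q_i>p_i=n-d_j$ would force descents of $q$ at two adjacent positions between $i$ and $d_j$. Here I use that blocks have length at most $3$ and that $q_{d_j}=n-d_j\ge q_{d_j+1}+1$. So $q=p$ on every block, and $p$ is maximal.
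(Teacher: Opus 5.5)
Your overall route is the same as the paper's: local raising moves for necessity, and a block-by-block comparison against the caps $n-d_j$ for sufficiency. Your treatments of (b), (c), (a) and sufficiency go through once you know a maximal $p$ is weakly decreasing. The gap is that monotonicity step, on which everything else rests, and your case analysis there is wrong, not merely unfinished.

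\begin{itemize}
\item When $p_{i-1}>p_{i+1}$ you raise $p_i$ to $\min(p_{i-1},n-i)$. If $p_{i-1}=n-i+1$, this keeps $i-1$ a descent and also makes $i$ a descent. For example, $p=(4,0,1,1,0)\in\lehmerpat{5}{\underline{321}}$ with $i=2$ becomes $(4,3,1,1,0)$. Its descent set $\{1,2,4\}$ contains adjacent indices.
\item In your subcase $p_{i+1}>p_{i+2}$, setting $p_i=p_{i+1}$ leaves $i-1$ a descent, so the check you propose fails. For example, $(3,0,1,0)\mapsto(3,1,1,0)$ still has a descent at $1$.
\item Your ``cleaner alternative'' leaves out exactly the argument about $i-1$ that is needed.
\end{itemize}

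The missing observation removes the need for any case analysis. After the plain move $p_i\mapsto p_{i+1}$, position $i$ is not a descent. If $i-1$ is a descent of the new code, then $p_{i-1}>p_{i+1}>p_i$, so $i-1$ was already a descent of $p$. Hence the new descent set is contained in $D(p)$. The new code is therefore still $\underline{321}$-avoiding and strictly larger, contradicting maximality. This is exactly the paper's argument.

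Smaller points:
\begin{itemize}
\item In (b), raising $p_{d_j}$ alone already works, because raising an entry can never create a descent immediately to its left. Also, $d_j$ is by definition always the last index of its block.
\item Your claim that ``a weakly decreasing code has no descents with gaps larger than $3$'' should be about maximal codes.
\item The zero code ($m=0$) has to be dismissed separately.
\item In sufficiency, the adjacent pair of descents of $q$ may be $\{d_{j-1},d_{j-1}+1\}$, which does not lie between $i$ and $d_j$. The last block also needs $q_n=0$ in place of a cap at a descent.
\end{itemize}
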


Figure \ref{fig:maxconsec321} shows the codes in $\Max \lehmerpat{7}{\underline{321}}$ with their descent sets. 
Note that condition (a) is equivalent to saying that $D(p)$ is a maximal subset of $[n-1]$ that contains no consecutive elements.
After the descent set is chosen, conditions (b) and (c) determine the code itself.

\begin{figure}[t]
\begin{center}
\begin{tikzpicture}[scale=.6]

    \newtri[(0,0)]{5,5,2,2,2,0}{teal};
    \node at (0,-0.8) {\footnotesize $\{2,5\}$};

    \newtri[(3.5,0)]{5,5,3,3,1,1}{teal};
    \node at (3.5,-0.8) {\footnotesize $\{2,4,6\}$};

    \newtri[(7,0)]{6,3,3,3,1,1}{teal};
    \node at (7,-0.8) {\footnotesize $\{1,4,6\}$};

    \newtri[(10.5,0)]{6,4,4,1,1,1}{teal};
    \node at (10.5,-0.8) {\footnotesize $\{1,3,6\}$};

    \newtri[(14,0)]{6,4,4,2,2,0}{teal};
    \node at (14,-0.8) {\footnotesize $\{1,3,5\}$};
  
    \node at (-2, 0.5) {\footnotesize $p$};
    \node at (-2, -0.8) {\footnotesize $D(p)$};

\end{tikzpicture}
\end{center}
\caption{The triangle representations of the maximal Lehmer codes in $\maxset{7}{\underline{321}}$, labeled by their descent sets.}
\label{fig:maxconsec321}
\end{figure}

\begin{proof}
By Corollary \ref{cor:descent-increase-by-2}, code $p \in \lehmerpat{n}{\underline{321}}$ if and only if $D(p)$ has no consecutive indices.

First suppose that $p$ is maximal. If $p_i<p_{i+1}$ for some $i$, then replacing $p_i$ by $p_{i+1}$ gives a larger valid Lehmer code and creates no pair of consecutive descents,
contradicting the maximality of $p$. Hence $p$ is weakly decreasing, so it is constant on the block intervals
between consecutive descents, proving (c).

Next let $d_j\in D(p)$. If $p_{d_j}<n-d_j$, then increasing $p_{d_j}$ by $1$ still gives a
valid Lehmer code. Since $D(p)$ has no consecutive indices, this creates no pair of consecutive descents, again contradicting the maximality of $p$. Thus $p_{d_j}=n-d_j$ for every $j$, proving (b).

Finally, we claim that (a) holds, so that  $D(p)$ is  inclusion-maximal among subsets of $[n-1]$ with no consecutive indices. We prove the contrapositive: if $p \in \lehmerpat{n}{\underline{321}}$ does not satisfy condition (a), then there exists $q \in \lehmerpat{n}{\underline{321}}$ such that $q \succ p$. There are three cases to consider.
First, if $d_1 \geq 3$, then create $q$ by increasing $p_1$ from $n-d_1$ to $n-1$, which adds a descent at index 1. 
Second, if $d_m \leq n-3$, then create $q$ by increasing the values on $[d_m+1, n-1]$ from 0 to 1, which adds a descent at index $n-1$.
Third, if $d_{j+1}-d_{j} \geq 4$ for some $1 \leq j \leq m-1$, then create $q$ by increasing the values on $[d_{j}+1,d_{j}+2]$ from $n-d_{j+1}$ to $n-d_j-2$, which adds a descent at $d_j+2$.

Conversely, suppose that (a), (b), and (c) hold. Then $D(p)$ has no consecutive indices, so
$p\in\lehmerpat{n}{\underline{321}}$. Let $q\in\lehmerpat{n}{\underline{321}}$ satisfy $p\preceq q$. Since $p_{d_j}=n-d_j$ is already the largest possible value at position $d_j$,
we have $q_{d_j}=p_{d_j}$ for every $j$. Hence $D(p) \subseteq D(q)$.
If $q\neq p$, then by the block-constant form in (c), some block contains an entry of $q$ strictly larger than the corresponding entry of $p$. Moving right within that block, we encounter a descent of $q$ before reaching the end of the block, so that $D(p) \subsetneq D(q)$. But by (a), $D(p)$ is inclusion-maximal among subsets of $[n-1]$ with no consecutive indices,
so $D(q)$ must contain two consecutive indices. This contradicts $q\in\lehmerpat{n}{\underline{321}}$. Therefore $q=p$, and $p$ is maximal.
\end{proof}

Lemma \ref{lem:maximal_condition_consec_321} shows that a maximal code $p \in \Max \lehmerpat{n}{\underline{321}}$ is completely determined by its descent set. For example, we construct the code $p \in \Max \lehmerpat{8}{\underline{321}}$ with descent set $D=\{1,3,6\}$. First, we note that this set adheres to condition (a). We then use conditions (b) and (c) to obtain $p=(7,5,5,2,2,2,0,0)$. The corresponding $\underline{321}$-avoiding permutation is $\pi=86734512$.

%%%%%%%%%%%%%%%Maximal Size%%%%%%%%%%%%%%%%%%%%%

The enumeration of maximal codes $\Max \lehmerpat{n}{\underline{321}}$ is now reduced to counting the possible descent sets. Condition \eqref{eqn:desc_set_consec_321} in Lemma \ref{lem:maximal_condition_consec_321} gives a simple recurrence, according to whether the first descent is at position $1$ or position $2$. The enumeration 
matches the Padovan numbers, sequence \oeis{A000931} in the OEIS \cite{oeis}, shifted by one index.

\begin{theorem*}[Theorem \ref{thm:consec_321_number}]
The number of maximal $\underline{321}$-avoiding Lehmer codes is given by the Padovan sequence $|\maxset{n}{\underline{321}}| = P_{n-1}$, where $P_0=P_1=1$ and $P_2=2$, and $P_n = P_{n-2} + P_{n-3}$ for $n \geq 3$.
\end{theorem*}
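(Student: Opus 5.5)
By Lemma \ref{lem:maximal_condition_consec_321}, the map $p \mapsto D(p)$ is a bijection from $\maxset{n}{\underline{321}}$ onto the family $\mathcal{D}_n$ of subsets $D=\{d_1<\cdots<d_m\}\subseteq[n-1]$ satisfying condition \eqref{eqn:desc_set_consec_321}. Injectivity holds because conditions (b) and (c) determine $p$ from $D(p)$. For surjectivity, given $D\in\mathcal{D}_n$, we define $p$ to be constant on the blocks. On the block ending at $d_j$ we set the value $n-d_j$, and on the final block $[d_m+1,n]$ we set the value $0$. These values are strictly decreasing across blocks. We also have $p_i \le n-d_j \le n-i$ on a block ending at $d_j$, so $p$ is a Lehmer code, its descent set is exactly $D$, and it is maximal by the lemma. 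It therefore suffices to show $|\mathcal{D}_n| = P_{n-1}$.

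Equivalently, $\mathcal{D}_n$ consists of the inclusion-maximal subsets of $[n-1]$ with no two consecutive elements. Let $a_N$ be the number of such subsets of $[N]$, so that we must show $a_{n-1}=P_{n-1}$. We record two boundary points, $0$ and $N+1$, and require every gap in $0<d_1<\cdots<d_m<N+1$ to lie in $\{2,3\}$. This matches \eqref{eqn:desc_set_consec_321} when $N=n-1$, since $d_1\in\{1,2\}$ and $d_m\in\{n-2,n-1\}$ translate to first and last gaps of size $1$ or $2$. To handle this uniformly, we shift: we define $b_M$ to be the number of compositions of $M$ into parts from $\{2,3\}$.

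The main point to check is the correct encoding of the endpoints. I would prepend a virtual descent at $-1$ and append one at $N+2$. Then every gap, including the first and last, lies in $\{2,3\}$, and the descent sets in $\mathcal{D}_n$ correspond bijectively to compositions of $(N+2)-(-1)=N+3$ into parts $2$ and $3$ that have at least one internal point, that is, at least two parts. Since $N+3\ge 4$ whenever $N\ge 1$, any composition automatically has at least two parts, so $a_N=b_{N+3}$.

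Conditioning on the first part gives $b_M=b_{M-2}+b_{M-3}$, with $b_0=1$, $b_1=0$, $b_2=1$, $b_3=1$, $b_4=1$, $b_5=2$. Hence $a_N=b_{N+3}$ satisfies $a_N=a_{N-2}+a_{N-3}$. For the base cases, $a_1=1$ (the set $\{1\}$), $a_2=2$ (the sets $\{1\},\{2\}$), and $a_3=2$ (the sets $\{1,3\},\{2\}$). We compare with $P_1=1$, $P_2=2$, and $P_3=P_1+P_0=2$. The recurrences agree, so induction gives $a_{n-1}=P_{n-1}$ for $n\ge 2$.

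As a check, $n=7$ gives $P_6=P_4+P_3=2+3$, where $P_4=P_2+P_1=3$, so $P_6=5$, matching Figure \ref{fig:maxconsec321}. The only delicate step is getting the endpoint offsets and base indices right. The recurrence itself can also be seen directly: $d_1=1$ leaves a set of the same type on $[3,N]$, giving $a_{N-2}$, and $d_1=2$ leaves one on $[4,N]$, giving $a_{N-3}$.
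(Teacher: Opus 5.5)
Your proof is correct. The reduction to descent sets is the paper's. You also explicitly check that the block-constant code built from any $D$ satisfying \eqref{eqn:desc_set_consec_321} is a valid Lehmer code whose descent set is exactly $D$, which supplies the surjectivity the paper only asserts.

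For the count, the paper stays inside $\mathcal{D}_n$. It conditions on $d_1\in\{1,2\}$, shifts the remaining descents down by $2$ or $3$ to land in $\mathcal{D}_{n-2}$ or $\mathcal{D}_{n-3}$, and handles the boundary with the convention $\mathcal{D}_1=\{\emptyset\}$.

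Your main route adds virtual descents at $-1$ and $n+1$. Then every gap lies in $\{2,3\}$, and $|\maxset{n}{\underline{321}}|$ becomes the number of compositions of $n+2$ into parts $2$ and $3$. Conditioning on the first part, which equals $d_1+1$, is the same decomposition as the paper's, as your closing remark observes. What your encoding buys is the removal of the asymmetry between end gaps and interior gaps, and an identification of the answer with a standard Padovan model. The cost is some extra index bookkeeping between $b_M$ and $a_N$.

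One sentence needs fixing. With boundary points $0$ and $N+1$, the first and last gaps lie in $\{1,2\}$, not $\{2,3\}$, so the claim that requiring all gaps in $\{2,3\}$ ``matches'' \eqref{eqn:desc_set_consec_321} is false as written. The endpoints $-1$ and $N+2$ that you actually use are the correct ones, so that sentence can simply be removed.
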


\begin{proof}
For $n \geq 2$, let $\mathcal{D}_n = \{ D(p) : p \in \Max \lehmerpat{n}{\underline{321}}\}$ and define $\mathcal{D}_1 = \{ \emptyset \}$. 
By Lemma \ref{lem:maximal_condition_consec_321}, we have $|\mathcal{D}_n|= |\Max \lehmerpat{n}{\underline{321}}|$. Furthermore, for $n \geq 2$, $\mathcal{D}_n$ is the collection of subsets of $[n-1]$ satisfying condition \eqref{eqn:desc_set_consec_321}.
We have $\mathcal{D}_2 = \{ \{1\} \}$ and $\mathcal{D}_3 = \{ \{1\}, \{2\} \}$, so $|\mathcal{D}_1|=1$, $|\mathcal{D}_2|=1$ and $|\mathcal{D}_3|=2$.

For $n\ge 4$, partition $D\in\mathcal{D}_n$ according to its first descent. If $d_1=1$, then the remaining descents lie in $\{3,4,\ldots,n-1\}$, and after shifting all remaining descents down by $2$, we obtain an element of $\mathcal{D}_{n-2}$. Conversely, every element of $\mathcal{D}_{n-2}$ gives such a $D$ by shifting up by $2$ and adding element $1$ to the set.
If $d_1=2$, then the remaining descents lie in $\{4,5,\ldots,n-1\}$, and after shifting all remaining descents down by $3$, we obtain an element of $\mathcal{D}_{n-3}$. Conversely, every element of $\mathcal{D}_{n-3}$ gives such a $D$ by shifting up by $3$ and adding element $2$ to the set. Therefore
\[
|\mathcal{D}_n|=|\mathcal{D}_{n-2}|+|\mathcal{D}_{n-3}|\qquad\text{for }n\ge 4.
\]
Thus $|\maxset{n}{\underline{321}}|=|\mathcal{D}_n| = P_{n-1}$ is the shifted Padovan sequence.
\end{proof}

%%%%%%%%%%%%%%%Maximum Weight%%%%%%%%%%%%%%%%%%%%%

Finally, we use the block structure from Lemma \ref{lem:maximal_condition_consec_321} to determine the maximum weight. It is convenient to compare a maximal code with the full Lehmer code $(n-1,n-2,\ldots,1,0)$ and minimize the resulting defect.

\begin{theorem*}[Theorem \ref{thm:maximum-weight-321}]
For $n \geq 2$, we have
$$
\max \{ \inv(\pi) : \pi \in \permpat{n}{\underline{321}}  \} = 
\max \{ w(p) : p \in \lehmerpat{n}{\underline{321}} \}
=
\left\lceil \frac{(n-1)^2}{2}\right\rceil.
$$
Moreover, if $n = 2r$ is even, the unique maximum weight element is
\[
(2r-1, 2r-3, 2r-3, 2r-5, 2r-5, \ldots, 3, 3, 1, 1, 0)
\]
and when $n=2r+1$ is odd, the maximum weight elements are
\[
(2r, 2r-2, 2r-2, 2r-4, 2r-4, \ldots, 4, 4, 2, 2, 0, 0)
\]
and
\[
(2r-1, 2r-1, 2r-3, 2r-3, \ldots, 3, 3, 1, 1,  0).
\]
\end{theorem*}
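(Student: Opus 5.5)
The first equality holds because $\inv(\pi)=w(L(\pi))$. For the second, any maximum weight code is maximal, so I restrict to $\maxset{n}{\underline{321}}$ and use Lemma \ref{lem:maximal_condition_consec_321}. A maximal code is determined by its descent set $D=\{d_1,\ldots,d_m\}$ satisfying \eqref{eqn:desc_set_consec_321}. On the block $[d_{j-1}+1,d_j]$ (with $d_0=0$) the code is constant with value $n-d_j$, and on the last block $[d_m+1,n]$ it is $0$.

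Following the hint in the paper, I compare with the full code $(n-1,n-2,\ldots,0)$, whose weight is $\binom{n}{2}$. Define the defect $\delta(p)=\binom n2-w(p)=\sum_i (n-i-p_i)$. On a block of length $\ell$ ending at a descent, the entries $n-i-p_i$ are $\ell-1,\ell-2,\ldots,0$, so the block contributes $\binom{\ell}{2}$. The final block $[d_m+1,n]$ has length $\ell'=n-d_m\in\{1,2\}$ and contributes $\binom{\ell'}{2}$ as well, since there $p_i=0$ and $n-i$ runs over $\ell'-1,\ldots,0$. Thus
\[
\delta(p)=\sum_{\text{blocks}}\binom{\ell_b}{2},
\]
where the block lengths $\ell_b$ lie in $\{1,2,3\}$ and sum to $n$. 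Condition (a) translates into the following constraints: the first block has length $1$ or $2$, every middle block has length $2$ or $3$, and the last block has length $1$ or $2$.

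Next I minimize this defect. Each block of length $2$ costs $1$, each block of length $3$ costs $3$, and each block of length $1$ costs $0$, but only the first and last blocks may have length $1$. A length-$3$ block is never better than replacing it, where the constraints allow, by a length-$2$ block and shifting one unit into a length-$1$ end block. So I expect the minimizers to have all middle blocks of length $2$ and the end blocks of lengths in $\{1,2\}$, with as many length-$1$ ends as parity permits. The cleanest route is a direct lower bound. Let $a$ be the number of length-$1$ blocks, which are end blocks, so $a\le 2$. Since $\binom{\ell}{2}\ge \ell-1$, with equality iff $\ell\in\{1,2\}$, and all blocks other than these $a$ have length at least $2$, we get $\delta\ge (n-a)/2$. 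Combining this with $a\le2$ and the parity of $n-a$ (the non-length-$1$ blocks, all of length $\ge 2$ and with defect $\ge \ell-1$, total length $n-a$), I aim to show:
\begin{itemize}
\item for $n=2r$, the minimum is $\delta=r-1$, achieved only with $a=2$ and all other blocks of length $2$;
\item for $n=2r+1$, the minimum is $\delta=r$, achieved with $a=1$ at either end and all other blocks of length $2$.
\end{itemize}
Then $w=\binom n2-\delta$ gives $2r^2-r-(r-1)=2r^2-2r+1=\lceil (2r-1)^2/2\rceil$ in the even case and $\binom{2r+1}{2}-r=2r^2=(2r)^2/2$ in the odd case.

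The main obstacle is making this minimization rigorous, in particular the precise bound $\delta\ge \lceil (n-a)/2\rceil$ and ruling out length-$3$ blocks at equality. I would handle it by checking that length-$3$ blocks give $\binom32=3>2$, so any configuration containing one strictly exceeds the bound $(\text{length}-1)$ by at least $1$. Then I run the case analysis on $a\in\{0,1,2\}$ versus the parity of $n$: for even $n$ with $a=1$, parity forces a length-$3$ block or leaves $\delta\ge r$; for $a=0$, $\delta\ge r$. Finally, I read off the codes from the block structure. For even $n$ the blocks are $1,2,\ldots,2,1$, giving $(2r-1,2r-3,2r-3,\ldots,1,1,0)$. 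For odd $n$ the blocks are $1,2,\ldots,2$ or $2,\ldots,2,1$, yielding the two listed codes. I would also verify that these descent sets satisfy \eqref{eqn:desc_set_consec_321}, including the small cases $n=2,3$.
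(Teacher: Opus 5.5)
Your proposal is correct. Up to the final optimization it follows the paper's proof: you reduce to maximal codes via Lemma~\ref{lem:maximal_condition_consec_321} and write $\delta(p)=\binom n2-w(p)=\sum_j\binom{b_j}{2}$ over block lengths, with end blocks in $\{1,2\}$ and middle blocks in $\{2,3\}$.

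The difference is in how you minimize $\sum_j\binom{b_j}{2}$. The paper argues by local exchanges. It reorders the blocks so that $3$-blocks sit as far left as possible. It then trades two $3$-blocks for three $2$-blocks, and a leading $1,3$ or $2,3$ pair for $2,2$ or $1,2,2$. So a minimizer has no $3$-blocks, and parity then decides the ends.

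You instead use a global bound $\delta\ge\lceil (n-a)/2\rceil+k_3$, where $a\le 2$ is the number of $1$-blocks and $k_3$ the number of $3$-blocks. This bookkeeping can be made exact. If $k_2$ counts the $2$-blocks, then $2k_2+3k_3=n-a$, so
\[
\delta(p)=k_2+3k_3=\frac{n-a+3k_3}{2},\qquad k_3\equiv n-a \pmod 2 .
\]
Minimality and uniqueness can be read off at once: $a=2,\ k_3=0$ for even $n$, and $a=1,\ k_3=0$ (with the $1$-block at either end) for odd $n$. This avoids the paper's block-reordering step and gives the defect of every maximal code. The paper's exchange argument is shorter to state and shows more directly why $3$-blocks are wasteful.

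One point to make explicit when you write it up is the case $n=2r+1$ with $a=2$. There the ceiling bound alone gives only $\delta\ge r$, which matches the optimum. It is exactly here that you need your observation that $n-a$ odd forces $k_3\ge 1$, which gives $\delta\ge r+1$ and rules this case out.
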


\begin{figure}[t]
\begin{center}
\begin{tikzpicture}[scale=.6]

    % n = 2
    \newtri[(0,0)]{1}{teal};
    \node at (0,-0.8) {\footnotesize $(1,0)$};
    \node at (0,-1.55) {\footnotesize $n=2$};

    % n = 3
    \newtri[(3.2,0)]{2,0}{teal};
    \node at (3.2,-0.8) {\footnotesize $(2,0,0)$};

    \newtri[(5.8,0)]{1,1}{teal};
    \node at (5.8,-0.8) {\footnotesize $(1,1,0)$};

    \node at (4.5,-1.55) {\footnotesize $n=3$};

    % n = 4
    \newtri[(9.2,0)]{3,1,1}{teal};
    \node at (9.2,-0.8) {\footnotesize $(3,1,1,0)$};
    \node at (9.2,-1.55) {\footnotesize $n=4$};

    % n = 5
    \newtri[(13.0,0)]{4,2,2,0}{teal};
    \node at (13.0,-0.8) {\footnotesize $(4,2,2,0,0)$};

    \newtri[(16.2,0)]{3,3,1,1}{teal};
    \node at (16.2,-0.8) {\footnotesize $(3,3,1,1,0)$};

    \node at (14.6,-1.55) {\footnotesize $n=5$};

    % n = 6
    \newtri[(20.4,0)]{5,3,3,1,1}{teal};
    \node at (20.4,-0.8) {\footnotesize $(5,3,3,1,1,0)$};
    \node at (20.4,-1.55) {\footnotesize $n=6$};

\end{tikzpicture}
\end{center}
\caption{The triangle representations of the maximum weight $\underline{321}$-avoiding Lehmer codes for $2\leq n\leq 6$. For odd $n$, there are two maximum weight elements. For even $n$, the maximum weight element is unique.}
\label{fig:maxweightconsec321examples}
\end{figure}

Figure \ref{fig:maxweightconsec321examples} shows the maximum weight $\underline{321}$-avoiding Lehmer codes for $2 \leq n \leq 6$. 

\begin{proof}
The first equality holds because when $\pi$ has Lehmer code $p$, we have $\inv(\pi)=w(p)$, as noted in Definition \ref{def:lehmer}.

Let $p=(p_1,\ldots,p_n)\in \lehmerpat{n}{\underline{321}}$ be a maximum weight code. Since every maximum weight code is maximal, Lemma \ref{lem:maximal_condition_consec_321} implies that $p$ is determined by its descent set
$D(p)=\{d_1,d_2,\ldots,d_m\}$, which satisfies condition \eqref{eqn:desc_set_consec_321}.
Setting $d_0=0$ and $d_{m+1}=n$, we have
\[
p_i=n-d_j \qquad \text{whenever } i \in [d_{j-1}+1, d_j], \quad 1\le j\le m+1.
\]
We compare $p$ with the full Lehmer code
    $(n-1,n-2,\ldots,1,0)$, defining the \emph{defect} of $p$ by
\[
    \delta(p)=\sum_{i=1}^n (n-i-p_i).
\]
Since
\[
    w(p)=\sum_{i=1}^n p_i=\binom n2-\delta(p),
\]
maximizing $w(p)$ is equivalent to minimizing $\delta(p)$.

Let $(b_1, \ldots, b_{m+1})$ be the \emph{block length sequence} of $p$, where
\[
    b_j=d_j-d_{j-1}\text{ for }1\le j\le m+1,
\]
where we set $d_0=0$ and $d_{m+1}=n$. Then $\sum_{i=1}^{m+1} b_i = n$, where
\[
    b_1\in\{1,2\},\qquad
    b_j\in\{2,3\}\text{ for }2\le j\le m,\qquad
    b_{m+1}\in\{1,2\}.
\]
On block $[d_{j-1}+1, d_j]$ of length $b_j$, the entries of $p$ are constant and equal to $n-d_j$. Hence the defect contributed by this block is
\[
    0+1+\cdots+(b_j-1)=\binom{b_j}{2}.
\]
Therefore
\[
    \delta(p)=\sum_{j=1}^{m+1}\binom{b_j}{2}.
\]

For the allowed block lengths, the contributions to the defect are
\[
    \binom{1}{2}=0,\qquad\binom{2}{2}=1,\qquad\binom{3}{2}=3,
\]
and these values are independent of the location of the block. So we are free to reorder the blocks without changing the defect while still adhering to condition \eqref{eqn:desc_set_consec_321}, so that $1$-blocks can only appear as either the first or last block, and $3$-blocks cannot appear as either the first or last block. 
So let us reorder the blocks so that the $3$-blocks appear as far to the left as possible.

We claim that a block length sequence that minimizes the defect does not contain any $3$-blocks. Indeed, 
replacing two $3$-blocks with three $2$-blocks lowers the defect. So there is at most one $3$-block.
If the sequence starts with a $1$-block and a $3$-block, then replacing them with two $2$-blocks lowers the defect.
If the sequence starts with a $2$-block and a $3$-block, then replacing them with one $1$-block and two $2$-blocks lowers the defect.

If $n=2r$ is even, having a $1$-block at each end gives a smaller defect than having all $2$-blocks. So 
the unique minimum-defect block length sequence is
$
    (1,2,2,\ldots,2,1).
$
This corresponds to
$
    D(p)=\{1,3,5,\ldots,2r-1\}.
$
There are $r-1$ blocks of size $2$, so $\delta(p)=r-1$. Hence
\[
    w(p)=\binom{2r}{2}-(r-1) =2r^2-2r+1 =\left\lceil \frac{(2r-1)^2}{2}\right\rceil =\left\lceil \frac{(n-1)^2}{2}\right\rceil.
\]
This descent set corresponds to the unique even length code in the theorem statement.

If $n=2r+1$ is odd, then the two minimum-defect block length sequences are
$
    (1,2,2,\ldots,2,2) 
    \quad \mbox{and} \quad 
    (2,2,\ldots,2,1).
$
These sequences correspond respectively to
$D(p)=\{1,3,5,\ldots,2r-1\}$ and
$D(p)=\{2,4,6,\ldots,2r\}$.
In either case, there are $r$ blocks of size $2$, so $\delta(p)=r$. Therefore
\[
    w(p)=\binom{2r+1}{2}-r = 2r^2=\left\lceil \frac{(2r)^2}{2}\right\rceil=\left\lceil \frac{(n-1)^2}{2}\right\rceil.
\]
These two descent sets correspond to the two odd length codes in the theorem statement.
\end{proof}

\bibliographystyle{plain}
\bibliography{biblio}

@article{Babson2000, 
title={Generalized permutation patterns and a classification of the {M}ahonian statistics}, 
author={Babson, Eric and Steingr\'imsson, Einar}, 
volume={44}, 
journal={S\'eminaire Lotharingien de Combinatoire}, 
year={2000},  
pages={Article B44b} 
}

@misc{BHR,
      title={$\underline{32}1$-Avoiding Permutations with Maximum Inversion Number}, 
      author={Andrew Beveridge and Kristin Heysse and Paige Robertson},
      howpublished={arXiv preprint arXiv:2607.18417 [math.CO]},
      year={2026},
      note={Available at \url{https://arxiv.org/abs/2607.18417}}
}

@misc{BHL2026,
    title = {Lehmer Codes and the Reverse-Complement Mapping from \underline{32}1-Avoiding Permutations to 3\underline{21}-Avoiding Permutations},
    author = {Andrew Beveridge and Yufan Hu and Yucheng Liu},
    howpublished={arXiv preprint arXiv:2607.26900 [math.CO]},
    year={2026},
    note = {Available at \url{https://arxiv.org/abs/2607.26900}}
}

@article{bouvel,
	title = {Between Weak and {Bruhat}: the Middle Order on Permutations},
	volume = {41},
	issn = {1435-5914},
	url = {https://doi.org/10.1007/s00373-024-02885-3},
	doi = {10.1007/s00373-024-02885-3},
	number = {2},
	journal = {Graphs and Combinatorics},
	author = {Bouvel, Mathilde and Ferrari, Luca and Tenner, Bridget Eileen},
	month = feb,
	year = {2025},
	pages = {34},
}

@Article{denoncourt,
  Title                    = {A refinement of weak order intervals into distributive lattices},
  Author                   = {Hugh Denoncourt},
  Journal                  = {Annals of Combinatorics},
  Year                     = {2013},
    volume		={17},
  Pages                    = {655-670},

}

@incollection{Elizalde2016,
author="Elizalde, Sergi",
editor="Beveridge, Andrew
and Griggs, Jerrold R.
and Hogben, Leslie
and Musiker, Gregg
and Tetali, Prasad",
title="A survey of consecutive patterns in permutations",
bookTitle="Recent Trends in Combinatorics",
year="2016",
publisher="Springer International Publishing",
pages="601--618",
isbn="978-3-319-24298-9",
doi="10.1007/978-3-319-24298-9_24",
url="https://doi.org/10.1007/978-3-319-24298-9_24"
}

@article{ElizaldeNoy2003,
  author  = {Sergi Elizalde and Marc Noy},
  title   = {Consecutive patterns in permutations},
  journal = {Advances in Applied Mathematics},
  volume  = {30},
  number  = {1--2},
  pages   = {110--125},
  year    = {2003},
  doi     = {10.1016/S0196-8858(02)00527-4}
}

@article{Frosini2025,
  author  = {Frosini, Andrea and Guerrini, Veronica and Rinaldi, Simone},
  title   = {Constrained Underdiagonal Paths and Pattern Avoiding Permutations},
  journal = {Mathematics},
  volume  = {13},
  number  = {3},
  pages   = {517},
  year    = {2025},
  doi     = {10.3390/math13030517}
}

@book{kitaev,
  title     = "Patterns in Permutations and Words",
  author    = "Kitaev, Sergey",
  year      = 2011,
  publisher = "Springer-Verlag"
}

@book{knuth,
  title     = "The Art of Computer Programming, Volume 1: Fundamental Algorithms",
  author    = "Knuth, Donald E.",
  year      = 1968,
  publisher = "Addison-Wesley"
}

@inproceedings{lehmer,
  author       = {Derrick H. Lehmer},
  editor       = {Bellman, R. and Hall, Jr., M.},
  title        = {Teaching combinatorial tricks to a computer},
  year         = {1960},
  series       = {Proceedings of Symposia in Applied Mathematics},
  booktitle    = {Combinatorial Analysis},
  publisher={American Mathematical Society},
    volume                   = {10},
  pages        = {179--193},
}

@article{mansour2022, 
title={On a question of {L}i concerning
an uncounted class of circular permutations}, 
author={Mansour, Toufik and Shattuck, Mark}, 
volume={83},
number={1},
journal={Australasian Journal of Combinatorics}, 
year={2022},  
pages={176–195} 
}

@misc{oeis,
author={Neil J. A. Sloane and The OEIS Foundation Inc.},
title={The {O}n-{L}ine {E}ncyclopedia of {I}nteger {S}equences},
howpublished={\url{https://oeis.org}},
note={2026}
}

@article{simion1985restricted,
  title={Restricted permutations},
  author={Simion, Rodica and Schmidt, Frank W.},
  journal={European Journal of Combinatorics},
  volume={6},
  number={4},
  pages={383--406},
  year={1985},
  publisher={Academic Press},
  doi={10.1016/S0195-6698(85)80052-4}
}

@incollection{steingrimsson, 
place={Cambridge}, 
series={London Mathematical Society Lecture Note Series}, 
title={Generalized permutation patterns – a short survey}, 
booktitle={Permutation Patterns}, 
publisher={Cambridge University Press}, 
author={Steingrímsson, Einar}, 
editor={Linton, Steve and Ruškuc, Nik and Vatter, Vincent}, 
year={2010}, 
pages={137--152}, 
collection={London Mathematical Society Lecture Note Series}
}

@Article{tomie,
  Title                    = {Two characterizations of the shape of the base poset derived from the {L}ehmer code of a permutation using permutation patterns},
  Author                   = {Masaya Tomie},
  Journal                  = {Journal of Combinatorics},
  Year                     = {2014},
  volume		={5},
  number		={4},
  Pages                    = {499-514},

}

\end{document}